\definecolor{linkblue}{rgb}{0.1,0.1,0.8}
\newcommand{\ignore}[1]{}
\definecolor{darkgreen}{RGB}{0,127,0}
\newtheorem{theorem}{Theorem}[section]
\newtheorem{lemma}[theorem]{Lemma}
\newtheorem{corollary}[theorem]{Corollary}
\newcommand{\eps}{\varepsilon}
\newcommand{\soda}[1]{}
\newcommand{\full}[1]{#1}
\title{Push is Fast on Sparse Random Graphs}
\author{Florian Meier}
\author{Ueli Peter}
\affil{ETH Z\"{u}rich}
\date{\today}
\begin{document}
\maketitle 

\begin{abstract}
We consider the classical push broadcast process on a large class of sparse random multigraphs that includes random power law graphs and multigraphs. Our analysis shows that for every $\eps>0$, whp\footnote{\emph{with high probability} here denotes with probability $1-o(1)$.} $O(\log n)$ rounds are sufficient to inform all but an $\eps$-fraction of the vertices. 

It is not hard to see that, e.g. for random power law graphs, the push process needs whp $n^{\Omega(1)}$ rounds to inform all vertices. Fountoulakis, Panagiotou and Sauerwald proved that for random graphs that have power law degree sequences with $\beta>3$, the push-pull protocol needs $\Omega(\log n)$ to inform all but $\eps n$ vertices whp. Our result demonstrates that, for such random graphs, the pull mechanism does not (asymptotically) improve the running time.  This is surprising as it is known that, on random power law graphs with $2<\beta<3$, push-pull is exponentially faster than pull.

\end{abstract}

\thispagestyle{empty}
\clearpage
\setcounter{page}{1}
\section{Introduction}
In the \emph{push} process (or protcol/model) on a graph $G$ with $n$ vertices, initially (in round $0$) only one arbitrary vertex knows the rumor. In each round $i\geq 1$, every vertex that knows the rumor informs uniformly at random one of its neighbors. The push process has a counter part called \emph{pull} process, where in every round, every uninformed vertex asks a random neighbor for the rumor. 

Recently, there has been large interest in analysing push and pull. \full{For general graphs, Giakkoupis \cite{giakkoupis2011tight} continued the work of Chierichetti, Lattanzi and Panconesi \cite{chierichetti2010almost,chierichetti2010rumour} and showed that the \emph{push-pull} protocol (the combination of push and pull) informs \emph{whp} (with high probability, i.e. with probability tending to one as $n$ tends to infinity) every vertex of a graph in $O( \log n/\phi)$ rounds, where $\phi$ denotes the conductance of the graph. }
For the push protocol, Frieze and Grimmet showed in \cite{frieze1985shortest}  that it informs whp all vertices of the complete graph $K_n$ in $(1+o(1))\log_2n+\ln n$. In \cite{fountoulakis2010reliable}, Fountoulakis, Huber and Panagiotou generalised this result to dense random graphs with average degree $\omega(\ln n)$. The first result for push on sparse random graphs was by Fountoulakis and Panagiotou \cite{fountoulakis2012rumor} who proved that, for $d$-regular random graphs and multigraphs, the time needed to spread the rumor is whp $(1+o(1))c_d\ln n$, where $c_d$ is a constant that depends on $d$. 

A class of sparse random graphs that has been studied in many contexts is the class of random graphs with a degree sequence that follows a \emph{power law}, i.e. the number of vertices of degree $k$ is proportional to $k^{-\beta}$
. Power law graphs have gained considerable attention in recent years as the empirical evidence suggests that many of today's largest real world networks, both physical, such as the internet, and virtual, like social networks or the web graph, follow such a degree sequence \cite{mitzenmacher2004brief}. It is therefore of particular importance that the building blocks of distributed systems, such as broadcasting protocols, perform well on graphs with power law degree sequence. 

Random graphs with power law degree sequences contain whp many vertices of small degree that are only connected to vertices of very large degree (i.e. of degree $n^{\Omega(1)}$). It follows that the push protocol needs whp $n^{\Omega(1)}$ rounds to inform all these vertices.
For the push-pull protocol, Fountoulakis, Panagiotou and Sauerwald \cite{fountoulakis2012ultra} showed that the time needed to spread the information to at least an $(1-\eps)$-fraction of the vertices of a random power law graph\footnote{They use the Chung-Lu model \cite{chung2004average}, which is slightly different than the uniform random graph model considered in this paper.} with $2<\beta<3$ is whp of order $O(\log \log n)$ while if $\beta>3$ it is whp $\Omega(\log n)$. We follow their line of research and show that for power law random multigraphs the push model suffices to inform whp a $(1-\eps)$-fraction of the vertices (for every $\eps>0$) in logarithmic time. Our result holds for a much larger class of sparse random multigraphs, which contains random regular graphs, random bounded degree graphs and many more and generalises for most of those classes (unfortunately not for power law random graphs with $\beta<3$) to the corresponding class of random (simple) graphs. Note that for push the number of informed vertices can at most double in every round. Thus, $\Omega(\log n)$ is a lower bound for every graph, and our result therefore implies that, while for power law networks with $2<\beta< 3$ push-pull is exponentially faster than push (by \cite{fountoulakis2012ultra}), for $\beta>3$, the pull mechanism does not lead to an (asymptotical) improvement. 

\subsection{Our Contribution}
\label{sec:our_contribution}
For a graph or multigraph $M=(V, E)$, let $T(M)$ denote the time until all vertices are informed by the push protocol, and for $0 \le \eps\le 1$ let $T_{\eps}(M)$ denote the time until all but at most $\eps |V|$ vertices are informed. Before we state the theorem, we quickly introduce the random graph model. 

A \emph{degree sequence} on $n$ vertices is a sequence of integers $D_n= d_{1,n}, d_{2,n}, \dots , d_{n,n}$, where $d_{i,n}$ is the degree of the $i$-th vertex for $1\le i\le n$. We consider infinite sequences $\mathcal{D}=D_1, D_2, \dots$, such that for each $i\geq 1$ $D_i$ is a degree sequence of length $i$. We say that such a sequence $\mathcal{D}$ is an \emph{asymptotic degree sequence} if there exist a sequence of real numbers $\lambda_1, \lambda_2, \dots$ such that  
$$
\lim_{n\rightarrow\infty} \frac{|\left \{j\middle| d_{j,n}= i\right\}|}{n}=\lambda_i 
\quad \text{ for every $i\geq 1$ and } 
 \quad \lim_{n\rightarrow\infty}\sum_{j=1}^nd_{j,n}=n\sum_{j=0}^{\infty}j\lambda_j.$$
Moreover, we say that $\mathcal{D}$ is  \emph{2-smooth} if 
$$
\sum_{j=1}^{\infty}j^2\lambda_j<\infty.
$$
A sufficiently large (multi)graph created from a 2-smooth asymptotic degree sequence is always \emph{sparse} in the sense that the number of edges is linear in the number of vertices, since
\begin{equation}
\label{eq:two_smooth_sparse}
\lim_{n\rightarrow\infty} \frac{\sum_{j=1}^nd_{j,n}}{n}= \sum_{j=1}^{\infty}j\lambda_j\le  \sum_{j=1}^{\infty}j^2\lambda_j<\infty.
\end{equation}
Conversely, not every sparse degree sequence is $2$-smooth (we demonstrate this later on the example of a power-law with $\beta<3$).

We denote by 
$$\Delta:=\Delta(\mathcal{D},n)=\max\left\{d_{i,n}\mid 1\le i\le n\right\}$$
 the maximum degree of an asymptotic degree sequence $\mathcal{D}$.
Moreover, let $\delta(\mathcal{D}):=d-1$ if there exist an integer $d$ such that $\lambda_d=1$. Otherwise, let $\delta$ be the minimum degree. By this definition of $\delta$ we assure that there are linearly many vertices of degree larger than $\delta$.

Let $M(\mathcal{D})$ be the sequence of random multigraphs such that the $i$-th element $M_i(\mathcal{D})$ is chosen uniformly at random from all multigraphs with degree sequence $D_i$. We say that $M(\mathcal{D})$ has a given property w.h.p if the probability that $M_n(\mathcal{D})$ has this property goes to $1$ as $n$ tends to infinity. We use the same notation for simple graphs but we write $G(\mathcal{D})$ and $G_i(\mathcal{D})$
 instead of $M(\mathcal{D})$ and $M_i(\mathcal{D})$.
Similar definitions of fixed degree sequences for random graphs have been used in \cite{molloy1998size}, \cite{fernholz2004cores} and \cite{fernholz2007diameter}.

\begin{theorem}[Main Result]
\label{thm:main_theorem}
Let $\mathcal{D}$ be a sparse asymptotic degree sequence with $\delta\geq3$, maximum degree $\Delta$ and let $c_{\mathcal{D}}:=\frac{1}{\ln\left(2\left(1-\frac{1}{\delta} \right)\right)}$. 

If $\Delta=o(\sqrt{n})$, then for every $n$, every $\eps>0$ and every $c>c_{\mathcal{D}}$ we have w.h.p
$$T_{\eps}(M(\mathcal{D})) \le c \ln n.$$

If $\Delta$ does not depend on $n$, then we have for every $n$ with probability $1-o(1/n)$
$$T(M(\mathcal{D})) =O(\ln n).$$
\end{theorem}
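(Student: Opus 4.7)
The plan is to work in the \emph{configuration model}: each vertex $v$ is equipped with $d_v$ half-edges, and the half-edges are matched uniformly at random, with the matching revealed only when queried (principle of deferred decisions). In each round of push, every informed vertex picks an incident half-edge uniformly at random and the partner half-edge is then revealed, giving the pushed-to neighbor. I would decompose the analysis of $T_\eps$ into two growth phases according to the size of the informed set $S_t$, and add a finishing phase to extend the bound to $T$ in the constant-$\Delta$ case.

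\textbf{Phase I (tree regime, $|S_t|=o(\sqrt n)$).} While the revealed subgraph has $o(\sqrt n)$ half-edges, any newly revealed half-edge matches into the already revealed set with probability $O(|S_t|/n)$, so whp no cycles are created. In this regime let $k_v$ denote the number of informed neighbors of $v\in S_t$; the expected number of newly informed vertices in a round equals $\sum_{v\in S_t}(1-k_v/d_v)$. Using $\sum_v k_v = 2(|S_t|-1)$ and $d_v\ge\delta$ yields the lower bound $|S_t|(1-2/\delta)+O(1)$, so that $\E[\,|S_{t+1}|\,]\ge 2(1-1/\delta)\,|S_t|$. Applying Azuma--Hoeffding to the Doob martingale over the sequence of half-edge revelations gives concentration, and so $|S_t|\ge n^{\alpha}$ whp after $(1+o(1))\,c_\mathcal{D}\,\alpha\ln n$ rounds for any fixed $\alpha<1/2$.

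\textbf{Phase II (bulk growth to $(1-\eps)n$).} Once $|S_t|$ is polynomially large, cycles in the revealed subgraph can no longer be ruled out, but the assumption $\Delta=o(\sqrt n)$ keeps the density of short cycles $o(1)$, so that the identity $\sum_v k_v = 2\cdot(\text{informed edges}) = (2+o(1))\,|S_t|$ still holds. Sparsity (i.e.\ 2-smoothness) guarantees that as long as $|S_t|\le(1-\eps)n$ a constant fraction of all half-edges still belong to uninformed vertices, so pushes are not wasted systematically. A Chernoff bound applied to the conditionally independent pushes within a round preserves a multiplicative growth rate of at least $2(1-1/\delta)(1-o(1))$, hence the informed set crosses the $(1-\eps)n$ threshold within a further $(1+o(1))\,c_\mathcal{D}\,(1-\alpha)\ln n$ rounds. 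Combining Phases I and II yields $T_\eps(M(\mathcal{D}))\le c\ln n$ whp for any $c>c_\mathcal{D}$, which is the first assertion.

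\textbf{Phase III and main obstacle.} For the second assertion, when $\Delta$ is a constant $D$, at the end of Phase II at most $\eps n$ vertices remain uninformed, each with at most $D$ neighbors. A standard expansion argument in the configuration model shows that whp every such vertex already has at least one informed neighbor; each such neighbor pushes to it with probability at least $1/D$ per round, so the probability that a fixed uninformed vertex survives $K\ln n$ further rounds is at most $(1-1/D)^{K\ln n}=n^{-\Omega(K)}$. A union bound over the at most $n$ uninformed vertices upgrades this to probability $1-o(1/n)$, proving $T(M(\mathcal{D}))=O(\ln n)$ in the bounded-degree case. The main technical obstacle will be Phase II: preserving the sharp growth factor $2(1-1/\delta)$ in the presence of heterogeneous degrees, a degree-biased boundary, and collisions of multiple pushes onto the same uninformed vertex; a careful tracking of the degree distribution on the boundary of $S_t$ --- or a subtle drift argument lower-bounding $\E[\,|S_{t+1}|\mid \mathcal{F}_t\,]$ uniformly --- is what is needed to keep the constant $c_\mathcal{D}$ tight.
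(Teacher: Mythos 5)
Your Phase II contains the decisive gap. The claim that the tree identity $\sum_{v} k_v = (2+o(1))|S_t|$ persists as long as $|S_t|\le(1-\eps)n$ is false: once the total degree of the informed set is a constant fraction of all stubs, a constant fraction of freshly revealed half-edges are back matches (they land on already informed vertices), so the revealed informed subgraph acquires linearly many excess edges and the per-round growth factor drops below $2(1-1/\delta)$ by a constant --- destroying exactly the constant $c_{\mathcal D}$ you are trying to preserve. Note also that this can happen well before $|S_t|=\Theta(n)$, because push is degree-biased and the sequence is merely sparse: the informed set can absorb the high-degree vertices early, so that $\sum_{v\in S_t} d_v=\Theta(n)$ while $|S_t|=o(n)$. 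Your appeal to $\Delta=o(\sqrt n)$ keeping the ``density of short cycles $o(1)$'' does not address this; that hypothesis controls the simplicity probability and early collisions, not bulk back-match rates. The paper's proof is built around precisely this obstacle: it truncates attention to vertices of degree at most a constant $M$ and stops the exponential phase already at $|I_i|=\alpha n$ for a \emph{small} constant $\alpha$, chosen so that the back-match probability stays below a small constant $\gamma$ (their \eqref{eq:prob_not_informed_small}); it then preserves the growth factor $2(1-1/\delta)$ \emph{exactly} (up to $1\pm o(1)$) by coupling the delayed push process to a clean $(\delta-1)$-ary tree process, paying for every back match with a surplus ``twin'' stub --- the surplus exists because linearly many vertices have degree strictly greater than $\delta$, which is also why the paper defines $\delta=d-1$ for $d$-regular sequences (your drift bound with $d_v\ge\delta$ would not survive back matches on regular sequences without this). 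Finally, the stretch from $\alpha n$ to $(1-\eps)n$ is covered not by growth at all but by Lemma~\ref{lem:lem_short_path}: almost every uninformed vertex has an $O(\log\log n)$-length path of degree-$\le\log\log n$ vertices into $I_{t_2}$, along which the rumor travels in $o(\log n)$ rounds. You have no argument for this stretch; a constant growth factor per round at linear sizes would indeed only cost $O(1)$ rounds, but your sketch does not establish it (informed vertices may have exhausted their unmatched stubs, and the fresh-push count at linear sizes is not controlled by your tree identity).

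Your Phase III has two further problems. First, the assertion that whp every remaining uninformed vertex ``already has at least one informed neighbor'' is false in general: with $\eps n$ uninformed vertices there can be uninformed clusters whose members have no informed neighbor, which is why the paper proves existence of short connecting paths and analyses the rumor's delays along the whole path (with $t'=6\Delta^2\ln n$ and a union bound). Second, the bounded-degree statement requires probability $1-o(1/n)$ in \emph{every} phase, and your Phases I--II deliver only whp: from a constant-size informed set the process stalls in a given round with constant probability, and Azuma/Chernoff bounds are vacuous below polylogarithmic sizes, so the start-up alone has failure probability $\Omega(1)$ unless you budget extra rounds for it. The paper handles this in Lemma~\ref{lem:phase_1} by allotting per-level time budgets $t_j=\Theta(\log n/j^2)$ whose sum is still $O(\log n)$, driving the failure probability of the initial tree-building down to $o(1/n)$. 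A minor point: the theorem assumes only sparseness, not $2$-smoothness --- the latter enters only in Corollary~\ref{cor:main_cor} for simple graphs --- so your Phase II should not invoke it.
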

Let us first remark to what extent our theorem extends to simple graphs. 
It is well known (see Section~\ref{sec:degree_sequences}) that for a 2-smooth degree sequence $D_n$ with maximum degree $o(\sqrt{n})$ a non negligible fraction of all multigraphs is simple, and therefore that every graph property that holds w.h.p in $M(D_n)$ also holds w.h.p in $G(D_n)$.  We restate the first statement of our main theorem for simple graphs in the following corollary. The second statement is already known for simple graphs. Since random bounded degree graphs have w.h.p a diameter of order $O(\ln n)$ (see \cite{fernholz2007diameter}), it follows from a result by Feige et. al. \cite{feige1990randomized} which states that $T(G)=O(\ln n)$ for every bounded degree simple graph $G$ with diameter at most $O(\ln n)$.
 
\begin{corollary}
\label{cor:main_cor}
Let $\mathcal{D}$ be a $2$-smooth asymptotic degree sequence with $\delta\geq3$, maximum degree $\Delta$ and let $c_{\mathcal{D}}:=\frac{1}{\ln\left(2\left(1-\frac{1}{\delta} \right)\right)}$. 

If $\Delta=o(\sqrt{n})$, then for every $n$, every $\eps>0$ and every $c>c_{\mathcal{D}}$ we have w.h.p
$$T_{\eps}(G(\mathcal{D})) \le c \ln n.$$
\end{corollary}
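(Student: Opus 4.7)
The plan is to deduce the corollary directly from the first part of Theorem~\ref{thm:main_theorem} by a standard conditioning argument: show that a uniformly random multigraph with degree sequence $D_n$ is simple with probability bounded below by a positive constant, and then transfer the multigraph bound on $T_\eps$ to the simple graph model at the cost of only a constant factor in the failure probability.

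First, I would invoke the fact (recalled in Section~\ref{sec:degree_sequences} and attributed to, e.g., Bollob\'as and McKay) that for a 2-smooth asymptotic degree sequence $\mathcal{D}$ with $\Delta=o(\sqrt n)$ there exists a constant $p^\star=p^\star(\mathcal{D})>0$ with
\[
\liminf_{n\to\infty}\Pr\bigl[M_n(\mathcal{D})\text{ is simple}\bigr]\;\ge\; p^\star.
\]
This is the single place where 2-smoothness (rather than mere sparsity) is used: the expected number of loops and multi-edges in the configuration-model realisation converges to a finite constant determined by $\sum_j j\lambda_j$ and $\sum_j j(j-1)\lambda_j$, and the joint distribution of these counts is asymptotically Poisson, so the probability of zero loops and zero multi-edges is bounded away from $0$.

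Next, I would observe that conditional on being simple, $M_n(\mathcal{D})$ is distributed uniformly over all simple graphs with degree sequence $D_n$, which is exactly the distribution of $G_n(\mathcal{D})$. Hence for any graph event $A_n$,
\[
\Pr\bigl[G_n(\mathcal{D})\in A_n\bigr]
\;=\;\Pr\bigl[M_n(\mathcal{D})\in A_n \,\big|\, M_n(\mathcal{D})\text{ simple}\bigr]
\;\le\;\frac{\Pr\bigl[M_n(\mathcal{D})\in A_n\bigr]}{\Pr\bigl[M_n(\mathcal{D})\text{ simple}\bigr]}.
\]
Applying this to $A_n=\{T_\eps(\,\cdot\,)>c\ln n\}$ and using the first statement of Theorem~\ref{thm:main_theorem}, the numerator is $o(1)$ while the denominator is at least $p^\star/2$ for $n$ large, so the left-hand side is $o(1)$ as well. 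This yields $T_\eps(G(\mathcal{D}))\le c\ln n$ w.h.p.

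There is essentially no obstacle here beyond citing the asymptotic simplicity probability; the only subtle point is that 2-smoothness (assumed in the corollary but not in the first part of the theorem, which only needs sparsity) is precisely what the transfer step requires, so one should be careful to state the hypothesis of the invoked simplicity estimate in the form used above and not to claim the corresponding strengthening for the second statement of the theorem, where $\Delta$ is constant and $T(G)$ (rather than $T_\eps$) is already known to be $O(\log n)$ by the Feige--Peleg--Raghavan--Upfal bound cited in the paragraph preceding the corollary.
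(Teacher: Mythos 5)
Your proposal is correct and takes essentially the same route as the paper: the paper also deduces the corollary from the first statement of Theorem~\ref{thm:main_theorem} by observing that for a 2-smooth degree sequence with $\Delta=o(\sqrt{n})$ the simplicity probability of $M(D_n)$ is bounded away from zero (the paper cites Janson's formula $\Pr[M(D_n)\text{ is simple}]=e^{-\frac{1}{16|E|^2}(\sum_i d_i^2)^2+\frac{1}{4}}+o(1)$ rather than the Poisson heuristic you sketch, but the content is the same), so that conditioning on simplicity costs only a constant factor in the $o(1)$ failure probability. Your remark that 2-smoothness is exactly what the transfer step requires, and that it is not needed for the multigraph statement itself, matches the paper's own discussion.
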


Random regular graphs, random bounded degree graphs and random power-law graphs with $\beta>3$ all have $2$-smooth degree sequences. For a power-law with $\beta<3$, we have
$$\sum_{i=1}^{\infty} i^2 \lambda_i=\sum_{i=1}^{\infty} \Theta(i^{2-\beta})= \omega(1),$$
and the degree sequence is therefore not $2$-smooth. Unfortunately, this means that Corollary~\ref{cor:main_cor} does not include power-law degree sequences with $2<\beta<3$. But, since on power-law random graphs with $2<\beta<3$ push-pull is (at least) exponentially faster than push, the simple push protocol is more interesting for $\beta>3$. Moreover, since multiple edges and self-loops seem to be a disadvantage for fast rumor spreading, we believe that push is also fast for simple random power-law graphs in that regime but our methods can not be used to prove it. 

Requiring $\delta\geq 3$ is not a strong limitation, since $\lambda_2=0$ and minimum degree two is necessary to ensure that the random graph is w.h.p connected (see \cite{luczak1989sparse} for more details). The only sparse degree sequences that generate w.h.p connected graphs, but are not covered by our theorem, are $3$-regular and almost $3$-regular graphs (where only $o(n)$ vertices do not have degree $3$). Our proof technique would allow us to handle those graphs, but we omit the necessary case distinction for the sake of simplicity. For $d$-regular random graphs, Fountoulakis and Panagiotou proved in \cite{fountoulakis2012rumor} that w.h.p $T(G(\mathcal{D}))=(1+o(1))c_d\ln n$, where 
$$c_d:=\frac{1}{\ln\left(2\left(1-\frac{1}{d} \right)\right)} - \frac{1}{d\ln\left(1-\frac{1}{d} \right)}.$$ 
Note that $c_d>c_{\mathcal{D}}$ holds for $d$-regular degree sequences $\mathcal{D}$ and $d\geq 4$. Therefore, even for very small $\eps>0$, the time needed to inform the last $\eps n$ vertices is not negligible. This might come as a surprise first, but it is rather simple to explain. In a $d$-regular graph, the probability that a vertex does not receive the rumor, even if all its neighbors are informed, is $\left(\frac{d-1}{d}\right)^d$ 
in every round. Thus, even for $t\approx -(\frac{1}{d\ln(1-1/d)})\ln n$ rounds, the probability that a vertex does not get the information in $t$ rounds, is $\Omega(1/n)$. For a linear fraction of non-informed vertices, we expect that constantly many remain non-informed after $t$ rounds. For large $d$ the difference between $c_d$ and $c_{\mathcal{D}}$ tends to exactly this $-\frac{1}{d\ln(1-1/d)}$. For small $d$ there is a gap between $c_d-\frac{1}{d\ln(1-1/d)}$ and $c_{\mathcal{D}}$, but by treating random regular graphs separately, our proof could be optimized to yield the correct constant.

\full{Having answered these questions about push, it is natural to ask for the running time of its symmetric counter part pull. Let $\bar{T}(G)$ denote the time pull needs to inform all vertices of a multigraph $M$ and similarly, let $\bar{T}_{\varepsilon}(M)$ denote the time it needs to inform all but $\varepsilon n$ vertices for a fixed $\varepsilon>0$. It is well known (see for example \cite{giakkoupis2011tight}) that for every multigraph $M$, every upper bound that holds with probability $1-f(n)$ for $T(M)$, holds with probability $1-O(n\cdot f(n))$ for $\bar{T}(M)$ and vice versa. For bounded degree sequences, our result therefore implies the second statement of the Corollary~\ref{cor:main} below. Note that this does not follow from \cite{fountoulakis2012rumor}, since the result there holds only with probability $1-o(1)$.

For power-law degree sequences, we observe that push and pull behave very differently. While push fails in informing the last few vertices efficiently, depending on the choice of the initial vertex, pull might need polynomially many rounds to inform the second vertex. More precisely, in a power-law random graph, there is w.h.p a vertex $v$ of degree $O(1)$ that has only neighbors of degree at least $n^{\Omega(1)}$. If $v$ is the initial vertex, then the expected time until the second vertex gets informed is $n^{\Omega(1)}$. This observation implies that for a power-law random multigraph $M$, w.h.p $\bar{T}_{\varepsilon}(M)=n^{\Omega(1)}$ for some initial vertices. However, Theorem~\ref{thm:main_theorem} implies that $\bar{T}_{\varepsilon}(M)=O(\log n)$ holds for almost all initial vertices of a sparse random multigraph $G$. 

\begin{corollary}\label{cor:main}
Let $\mathcal{D}$ and $c_{\mathcal{D}}$ be as defined in Theorem~\ref{thm:main_theorem}.
If $\Delta=o(\sqrt{n})$, then w.h.p for every $n$, every $\eps>0$ and every $c>c_{\mathcal{D}}$ we have w.h.p 
$$\bar{T}_{\eps}(M(\mathcal{D})) \le c \ln n$$
for all except $o(n)$ choices of the initial vertex.
If $\Delta$ does not depend on $n$, then for every $n$ we have w.h.p
$$\bar{T}(M(\mathcal{D})) = O(\ln n).$$
\end{corollary}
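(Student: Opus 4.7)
The second statement is immediate: Theorem~\ref{thm:main_theorem} gives $T(M(\mathcal{D}))=O(\ln n)$ with failure probability $o(1/n)$ when $\Delta$ is bounded, and the push--pull duality recalled just above the corollary turns this into $\bar{T}(M(\mathcal{D}))=O(\ln n)$ with failure probability $O(n)\cdot o(1/n)=o(1)$, i.e.\ w.h.p.

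The first statement cannot be obtained as cleanly, because the push failure probability from Theorem~\ref{thm:main_theorem} is only $o(1)$ and the $O(n)$ loss in the duality would render a global pull bound trivial. The plan is instead to bound the number of \emph{bad} initial vertices, those $s$ for which pull from $s$ fails with probability at least $1/2$. Writing $p_{M,s}$ and $q_{M,s}$ for the failure probabilities, within $c\ln n$ rounds and informing all but $\eps n$ vertices, of push and pull from $s$ on a fixed multigraph $M$, the per-source version of the duality gives $q_{M,s}\le O(n)\cdot p_{M,s}$, so the number of bad sources is at most $2\sum_s q_{M,s}\le O(n)\sum_s p_{M,s}$. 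It therefore suffices to show that $\sum_s p_{M,s}=o(1)$ with probability $1-o(1)$ over $M$, which by Markov's inequality reduces to $\Ex_M[\sum_s p_{M,s}]=o(1)$.

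The main obstacle is the gap between this target and what Theorem~\ref{thm:main_theorem} yields out of the box: for each fixed $s$ one has only $\Ex_M[p_{M,s}]=o(1)$, and hence only $\Ex_M[\sum_s p_{M,s}]=o(n)$. Closing this gap requires a sharper bound on push averaged over sources, which I would obtain by re-examining the proof of Theorem~\ref{thm:main_theorem} and exploiting the fact that its failure events are essentially local to the neighbourhood of the source: the fraction of sources $s$ whose $O(\ln n)$-neighbourhood has an atypical structure (short cycles, high-degree concentration, etc.) should concentrate at rate $o(1/n)$ in a typical sparse random multigraph, giving the required strengthening. A technically lighter alternative is a bootstrap argument working directly with pull: under a $2$-smooth asymptotic degree sequence, almost every vertex has at least one neighbour of bounded degree, pull from $s$ informs such a neighbour within $O(\ln n)$ rounds, and the subsequent pull dynamics can be shown to be fast by comparison with the push analysis already established in Theorem~\ref{thm:main_theorem}.
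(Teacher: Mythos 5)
Your treatment of the second statement is correct and is exactly the paper's argument: the $1-o(1/n)$ bound in Theorem~\ref{thm:main_theorem} for bounded $\Delta$ survives the factor-$n$ loss in the push--pull duality, which is also why the paper remarks that the weaker $1-o(1)$ bound of \cite{fountoulakis2012rumor} would not suffice.

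For the first statement there is a genuine gap, and it is created by your own accounting rather than by any real deficiency of Theorem~\ref{thm:main_theorem}. You apply the duality in its full-broadcast form, $q_{M,s}\le O(n)\cdot p_{M,s}$, to the $\eps$-fraction statement. The factor $n$ there is the price of union-bounding over all $n$ possible target vertices; for $\bar T_\eps$ you do not need every target informed, so paying it is pure waste, and it is precisely what forces your target $\E_M[\sum_s p_{M,s}]=o(1)$, i.e.\ an average push failure probability of $o(1/n)$ per source. That strengthening is not available from the paper: for unbounded degrees the proof of Lemma~\ref{lem:phase_1} fails with probability $O(\log\log n/\log^{4/3}n)$, far above $o(1/n)$, so ``re-examining the proof'' will not concentrate the per-source failure at rate $o(1/n)$ without genuinely new work. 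Your two sketched repairs are speculative, and the pull bootstrap in particular is circular: there is no way to control ``the subsequent pull dynamics \dots by comparison with the push analysis'' except through the duality itself, which is the thing being replaced. (A smaller slip: the duality swaps source and target, so $q_{M,s}$ is controlled by push failures \emph{into} $s$ from other sources $u$, not by $p_{M,s}$; this happens to wash out once you sum over $s$, but as written the per-source inequality is false. Also, a threshold of $1/2$ for ``bad'' sources would only give good sources success probability above $1/2$, not w.h.p.; one needs a vanishing threshold.)

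The fix uses the duality per pair, with no factor $n$ at all: for every $u,s,t$, the probability that pull started at $s$ has informed $u$ within $t$ rounds equals the probability that push started at $u$ has informed $s$ within $t$ rounds. Let $X_s$ count the vertices uninformed after $c\ln n$ rounds of pull from $s$, and $Y_u$ the vertices uninformed after $c\ln n$ rounds of push from $u$. Then
\begin{equation*}
\sum_s \E[X_s]\;=\;\sum_s\sum_u \Pr[\text{push from }u\text{ misses }s]\;=\;\sum_u \E[Y_u].
\end{equation*}
Theorem~\ref{thm:main_theorem} holds for an arbitrary start vertex with failure probability $o(1)$ uniform in that vertex, and by a standard diagonalization over the fixed-$\eps$ statements one may take $\eps_1(n)\to 0$, giving $\E[Y_u]\le (\eps_1+o(1))n=o(n)$ uniformly in $u$, hence $\E_M\bigl[\sum_s \E[X_s]\bigr]=o(n^2)$. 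Two applications of Markov's inequality (once over $M$, once over the set of sources) yield that w.h.p.\ in $M$, all but $o(n)$ sources $s$ satisfy $\E[X_s]=o(n)$, and for each such $s$, $\Pr[X_s>\eps n]\le \E[X_s]/(\eps n)=o(1)$, i.e.\ pull from $s$ succeeds w.h.p.\ within $c\ln n$ rounds, losing only a factor $1/\eps$. For fairness to your attempt: the paper itself never writes this argument out --- Corollary~\ref{cor:main} is justified only by the duality remark preceding it --- but the averaging route above is what that one-line assertion rests on, and it needs nothing beyond Theorem~\ref{thm:main_theorem} as stated.
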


An interesting open question is whether $\bar{T}(M(\mathcal{D}))=O(\log n)$ holds w.h.p for almost all initial vertices of a random multigraph $M(\mathcal{D})$ with a sparse degree sequence $\mathcal{D}$. We conjecture that this is indeed the case. 

}

\section{Properties of the Degree Sequence and the Configuration Model}
\label{sec:degree_sequences}
In the previous section, we introduced the concept of a sparse asymptotic degree sequence. Let us here comment on two properties of such a degree sequence. 
Recall that we require the existence of an infinite sequence of real numbers $\lambda_1, \lambda_2, \dots$ such that
\begin{equation}
\label{eq:cumulate}
\lim_{n\rightarrow\infty} \frac{|\left \{j\middle| d_{j,n}= i\right\}|}{n}=\lambda_i.
\end{equation}
Thus, it follows that $\lim_{n\rightarrow\infty}\sum_{i=0}^n\lambda_i=1$.
Intuitively, $\lambda_i$ denotes the fraction of vertices of degree $i$ and we have $(\lambda_i\pm o(1))n$ vertices of degree $i$ in $M(D_n)$. 
Moreover, note that sparseness implies that  
\begin{equation}
\label{eq:only_few_large}
\lim_{i\rightarrow \infty }\lambda_i i=0.
\end{equation}
This observation allows us to handle sparse graphs almost like bounded degree graphs.  

A common method for generating random multigraphs with a fixed degree sequence $D_n$ is to create for every $1\le i\le n$ a vertex $v_i$ with $d_{i,n}$ \emph{stubs} (one half of an edge) attached to it, and connect the stubs by choosing a random \emph{configuration}, i.e. a random matching of the stubs. This random graph model is well known as the \emph{configuration model}, and it was introduced by Bollob{\'a}s \cite{bolloconf} and Bender and Canfield \cite{bender1978asymptotic} independently.  

Note that instead of choosing a configuration uniformly at random, we can iteratively choose an arbitrary unmatched stub and \emph{match} it to a stub that we choose u.a.r. from all unmatched stubs. This technique is called the \emph{principle of deferred decisions} (see \cite{mitzenmacher2005probability}), i.e. we delay every random choice until it is most convenient for our analysis. For the remainder of this paper, we always construct the random matching using deferred decisions. Whenever we match a stub, we denote by $U$ the set of all unmatched stubs and by $e\in_r U$ a stub $e$ that we choose u.a.r. from $U$.

It is a weakness of the configuration model that it creates multigraphs, i.e. the created graph may contain multiple edges and loops. In \cite{janson2009probability} Janson proved that for a degree sequence $D_n=d_{1, n}, \dots, d_{n, n}$ with maximum degree $o(\sqrt{n})$, the probability that a random multigraph $M(D_n)$ is simple is
$$\Pr[M(D_n)\text{ is simple}]=e^{-\frac{1}{16 |E|^2}\left(\sum_{i}d_i^2 \right)^2+\frac{1}{4}} +o(1).$$ 
This implies that for a 2-smooth degree sequence $D_n$ with maximum degree $o(\sqrt{n})$ a non negligible fraction of all configurations is simple, and therefore that every graph property that holds w.h.p in $M(D_n)$ also holds w.h.p in $G(D_n)$. \full{For a sparse degree sequence $D_n$ that is not 2-smooth, an event that holds with probability at least $1-o(\Pr[M(D_n)\text{ is simple}])$ in $M(D_n)$ holds w.h.p in $G(D_n)$. Thus, for a power-law degree sequence $D_n$ with $2<\beta<3$, we would have to prove that our statement holds with probability at least $1-o(e^{-n^{3/\beta -1}})$ for $M(D_n)$, to prove that it holds w.h.p for $G(D_n)$.}

\section{Tools}
In our proofs we apply at several occasions the well known Chernoff-Hoeffding bound (see e.g. Chapter~1 of \cite{dubhashi2009concentration}) for sums of independent Bernoulli random variables. Sometimes we need those bounds in the following slightly dependent setting. The proof of the following theorem follows directly from the Chernoff bound and is left as an exercise for the reader (see for example Problem~1.7 in \cite{dubhashi2009concentration}).

\begin{theorem}
\label{thm:chernoff}
Let $p, q\in [0,1]$ and let $X_1, \dots, X_n\in \{0,1\}$ be $n$ indicator variables and $X:=\sum_{i=1}^nX_i$. If for each $1\le i \le n$
$$\mathbb{E}[X_i|X_1, \dots, X_{i-1}] \geq p \quad \text{ and } \quad \mathbb{E}[X_i|X_1, \dots, X_{i-1}] \le q, $$
then it holds for every $0<\eps<1$ that 
$$\Pr[X\geq (1+\eps)nq]\le e^{-nq\eps^2/3} \quad \text{ and } \quad \Pr[X\le (1-\eps)np]\le e^{-np\eps^2/2}.$$
\end{theorem}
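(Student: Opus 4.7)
The plan is to proceed by the standard exponential moment method, adapted to the martingale-style setting where the $X_i$ are conditionally bounded in expectation but not assumed independent. The single observation that makes the proof work is that for a $\{0,1\}$-valued random variable, $e^{\lambda X_i} = 1 + (e^\lambda - 1)X_i$ is an affine function of $X_i$. Hence
\[
\mathbb{E}[e^{\lambda X_i}\mid X_1,\dots,X_{i-1}] \;=\; 1 + (e^\lambda-1)\,\mathbb{E}[X_i\mid X_1,\dots,X_{i-1}],
\]
which, combined with the hypothesis on the conditional means and the inequality $1+y\le e^y$, yields a clean conditional bound on the MGF that is identical in form to what one gets for truly independent Bernoullis.

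For the upper tail, I would take $\lambda>0$, so that $e^\lambda-1>0$ and the hypothesis $\mathbb{E}[X_i\mid X_1,\dots,X_{i-1}]\le q$ gives
\[
\mathbb{E}[e^{\lambda X_i}\mid X_1,\dots,X_{i-1}] \;\le\; 1+(e^\lambda-1)q \;\le\; e^{q(e^\lambda-1)}.
\]
Then I would peel off the variables one at a time using the tower property:
\[
\mathbb{E}\bigl[e^{\lambda X}\bigr] \;=\; \mathbb{E}\!\left[e^{\lambda\sum_{i<n}X_i}\,\mathbb{E}[e^{\lambda X_n}\mid X_1,\dots,X_{n-1}]\right] \;\le\; e^{q(e^\lambda-1)}\,\mathbb{E}\bigl[e^{\lambda\sum_{i<n}X_i}\bigr].
\]
Iterating $n$ times gives $\mathbb{E}[e^{\lambda X}]\le e^{nq(e^\lambda-1)}$, exactly the Chernoff MGF bound for a sum of $n$ independent $\mathrm{Ber}(q)$ variables. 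Markov's inequality with the optimal choice $\lambda=\ln(1+\eps)$ and the standard estimate $(1+\eps)\ln(1+\eps)-\eps\ge \eps^2/3$ for $0<\eps<1$ then yields $\Pr[X\ge(1+\eps)nq]\le e^{-nq\eps^2/3}$.

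For the lower tail the argument is symmetric but requires $\lambda<0$, which flips the sign of $e^\lambda-1$; this is where one uses the hypothesis $\mathbb{E}[X_i\mid X_1,\dots,X_{i-1}]\ge p$ to conclude
\[
\mathbb{E}[e^{\lambda X_i}\mid X_1,\dots,X_{i-1}] \;=\; 1+(e^\lambda-1)\,\mathbb{E}[X_i\mid X_1,\dots,X_{i-1}] \;\le\; 1+(e^\lambda-1)p \;\le\; e^{p(e^\lambda-1)}.
\]
The same iteration gives $\mathbb{E}[e^{\lambda X}]\le e^{np(e^\lambda-1)}$, and Markov applied to $e^{\lambda X}\ge e^{\lambda(1-\eps)np}$ (noting $\lambda<0$ reverses the inequality direction when exponentiating) with the optimal $\lambda=\ln(1-\eps)$ and the bound $(1-\eps)\ln(1-\eps)+\eps\ge \eps^2/2$ produces $\Pr[X\le(1-\eps)np]\le e^{-np\eps^2/2}$.

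There is no real obstacle here: the only subtle point is the sign of $e^\lambda-1$ when choosing $\lambda$, which dictates whether the upper-bound or the lower-bound hypothesis on the conditional expectation must be used. Everything else (the MGF-to-tail optimization and the two standard numerical inequalities in $\eps$) is identical to the classical Chernoff proof for independent Bernoullis, which is exactly why the authors describe the result as following directly from that bound.
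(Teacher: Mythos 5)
Your proof is correct: the affine identity $e^{\lambda X_i}=1+(e^\lambda-1)X_i$ for indicator variables, combined with the tower property and the sign-dependent use of the two conditional-mean hypotheses, gives exactly the MGF bounds $\mathbb{E}[e^{\lambda X}]\le e^{nq(e^\lambda-1)}$ (for $\lambda>0$) and $\mathbb{E}[e^{\lambda X}]\le e^{np(e^\lambda-1)}$ (for $\lambda<0$), after which the stated tails follow from the standard optimization and the numerical inequalities $(1+\eps)\ln(1+\eps)-\eps\ge\eps^2/3$ and $(1-\eps)\ln(1-\eps)+\eps\ge\eps^2/2$. The paper omits the proof entirely (it is left as an exercise, citing Problem~1.7 of Dubhashi--Panconesi), and your argument is precisely the intended standard route, so there is nothing to compare beyond noting that you have filled in the omitted details correctly.
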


We also need the following form of another classical type of concentration inequality, often referred to as the method of bounded differences or as Azuma's inequality (Theorem~5.3 in \cite{dubhashi2009concentration}).  
\begin{theorem}[Method of Bounded Differences]
\label{thm:bounded_differences}
Let $d_1\dots, d_n$ be a sequence of constants and let $f:=f (x_1 , \ldots , x_n )$ be a function that satisfies for all $1\le i\le n$ 
\begin{align*}
|f(a)-f(a')| &\leq d_i
\end{align*}
whenever $a$ and $a'$ differ in just the $i$-th coordinate. If $X_1 , \ldots , X_n$ are independent random variables, then
$$
\Pr[f > \mathbb{E}[f]+t]\leq e^{-\frac{t^2}{2\sum_{i=1}^n d_i^2}} \quad \text{ and } \quad  \Pr[f < \mathbb{E}[f]-t]\leq e^{-\frac{t^2}{2\sum_{i=1}^n d_i^2}}.
$$
\end{theorem}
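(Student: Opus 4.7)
The plan is to follow the standard martingale argument (McDiarmid's inequality). Define the Doob martingale $Z_i := \mathbb{E}[f(X_1,\ldots,X_n) \mid X_1,\ldots,X_i]$ for $0 \le i \le n$, so that $Z_0 = \mathbb{E}[f]$ and $Z_n = f(X_1,\ldots,X_n)$. The telescoping identity
\[
f - \mathbb{E}[f] \;=\; \sum_{i=1}^n (Z_i - Z_{i-1})
\]
reduces the theorem to an exponential tail bound for a sum of bounded martingale differences.

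First I would bound the increments. Fix $i$ and condition on $X_1,\ldots,X_{i-1}$. Because $X_i, X_{i+1}, \ldots, X_n$ are independent of the past, the function
\[
g(x) \;:=\; \mathbb{E}\bigl[f(X_1,\ldots,X_{i-1},x,X_{i+1},\ldots,X_n)\,\big|\,X_1,\ldots,X_{i-1}\bigr]
\]
is well-defined, and applying the bounded-differences hypothesis pointwise inside the expectation gives $|g(x)-g(x')| \le d_i$ for every $x,x'$. Since $Z_i = g(X_i)$ and $Z_{i-1} = \mathbb{E}[g(X_i) \mid X_1,\ldots,X_{i-1}]$ both lie in the range of $g$, which has diameter at most $d_i$, I conclude $|Z_i - Z_{i-1}| \le d_i$ almost surely. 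The key use of independence here is that substituting a deterministic value into the $i$-th slot leaves the conditional distribution of $(X_{i+1},\ldots,X_n)$ unchanged.

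Next I would apply Hoeffding's lemma conditionally to each increment. For any $\lambda > 0$, since $Y_i := Z_i - Z_{i-1}$ is centered conditional on $X_1,\ldots,X_{i-1}$ and satisfies $|Y_i| \le d_i$, Hoeffding's lemma gives
\[
\mathbb{E}\bigl[e^{\lambda Y_i}\,\big|\,X_1,\ldots,X_{i-1}\bigr] \;\le\; e^{\lambda^2 d_i^2/2}.
\]
Iterating this bound from the outside in via the tower property yields $\mathbb{E}[e^{\lambda(f-\mathbb{E}[f])}] \le \exp\!\bigl(\tfrac{\lambda^2}{2}\sum_i d_i^2\bigr)$. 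Markov's inequality then produces $\Pr[f-\mathbb{E}[f] \ge t] \le \exp\!\bigl(\tfrac{\lambda^2}{2}\sum_i d_i^2 - \lambda t\bigr)$, and optimizing at $\lambda = t/\sum_i d_i^2$ gives the claimed bound $e^{-t^2/(2\sum_i d_i^2)}$. The lower-tail inequality follows by applying the same reasoning to $-f$.

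The main obstacle is the step bounding $|Z_i - Z_{i-1}|$: one has to argue carefully that independence of the $X_j$ permits replacing $X_i$ by an arbitrary deterministic value inside the conditional expectation, so that the bounded-differences hypothesis on $f$ transfers to $g$, and hence to the martingale differences. Once this is in place, the remainder is a routine Chernoff-type computation.
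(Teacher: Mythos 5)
Your proof is correct, and there is nothing in the paper to compare it against: the paper states this result as an imported tool, citing Theorem~5.3 of Dubhashi--Panconesi without proof. Your argument is the canonical one for exactly this statement --- the Doob martingale $Z_i = \mathbb{E}[f \mid X_1,\ldots,X_i]$, the pointwise bound $|Z_i - Z_{i-1}| \le d_i$ (where independence is indeed the crucial ingredient, used correctly to identify $Z_i$ with $g(X_i)$ and to transfer the bounded-differences hypothesis to $g$), the conditional Hoeffding lemma giving $\mathbb{E}[e^{\lambda Y_i} \mid X_1,\ldots,X_{i-1}] \le e^{\lambda^2 d_i^2/2}$, and the optimized Chernoff bound. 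One small remark: by bounding $|Y_i| \le d_i$ you prove the Azuma-type constant $e^{-t^2/(2\sum_i d_i^2)}$, which is precisely what the theorem asserts; the sharper McDiarmid constant $e^{-2t^2/\sum_i d_i^2}$ would require the finer observation that, conditionally on the past, $Y_i$ ranges in an interval of \emph{length} $d_i$ (not merely $|Y_i|\le d_i$), yielding $e^{\lambda^2 d_i^2/8}$ in Hoeffding's lemma --- but that refinement is not needed here.
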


\section{Proof of the Main Results}
We use the configuration model to prove our main theorem for $M(\mathcal{D})$. Recall that we can match the stubs in the configuration model in an arbitrary order. For our purpose, it is very convenient to match a stub only when it is selected by the push process. Thereby, we combine the process that spreads the rumor with the process that matches the stubs. 
An important element of our proof is that we sometimes \emph{delay} a push. This means that for an unmatched stub $e$ that is selected by the push protocol, we postpone matching $e$ and thus also the push. A delayed stub may be matched and pushed in a later round or it may as well be omitted till the end. Note that delaying pushes can not make the spreading faster and therefore, since we are only interested in upper bounds on the rumor spreading, it is legitimate to delay pushes. We denote the process that combines the delayed push process with the matching of the stubs as the \emph{delayed random graph push process} or \emph{DRP process}.
For the proof of Theorem~\ref{thm:main_theorem}, we split the execution of the DRP process in three phases. 


Before we explain the three phases and state the main lemmas, we need to introduce some additional notation and constants.  
Let $0 < \gamma \le 1/6$ be a small enough constant, such that 
\begin{equation}
\label{eq:cdondition_gamma}
\frac{1-\lambda_{\delta}}{64\delta^2} >\frac{2\gamma}{1-\gamma} \quad \text{ and } \quad \gamma<\frac{1}{\delta},
\end{equation}
which is possible since $\lambda_{\delta}<1$.
Moreover, let $M$ be the smallest integer such that $\frac{\sum_{j=\delta}^M\lambda_j j}{\sum_{j=\delta}^{\infty}\lambda_j j}\geq 1-\frac{\gamma}{4}$ and let

\begin{equation}
\label{eq:cdondition_alpha}
\alpha :=\min\left\{\frac{\gamma\sum_{j=\delta}^{\infty}\lambda_j j}{2\cdot M }, \frac{1-\lambda_{\delta}}{1+\lambda_{\delta}}, \frac{(\delta-1)}{4\delta(1-\frac{1}{\delta})}\right\} \stackrel{\eqref{eq:only_few_large}}{=} \Theta(1).
\end{equation}
For $i\geq 1$ let $I_i$ denote the set of \emph{informed} vertices after round $i$ and let $N_i$ denote the set of \emph{newly informed} vertices in round $i$. Note that this definition of $\alpha, M$ and $\gamma$ assures that for $|I_i|< \alpha n$ the probability to match to an uninformed vertex of degree at most $M$, when choosing u.a.r. from the unmatched stubs $U$ in round $i$, is at least
\begin{equation}
\label{eq:prob_not_informed_small}
(1-o(1))\frac{n\sum_{j=\delta}^M\lambda_j j - |I_i|M}{n\sum_{j=\delta}^{\infty}\lambda_j j} \geq (1-o(1))(1-\frac{3\gamma}{4})>1-\gamma.
\end{equation}
We neglect vertices of degree larger than $M$ whenever possible. Let therefore $\bar{N}_i$ denote the set of non sleeping vertices of degree at most $M$ that have been newly informed in round $i$.

With this definitions at hand, we are ready to explain the three phases. For the sake of simplicity, we do not repeat the whole framework in the statements of the following lemmas, but we always make the assumption of Theorem~\ref{thm:main_theorem}. 

In the beginning, it is very unlikely that a vertex is informed more than once. Therefore, the rumor spreading builds a tree with branching factor at least $\delta-1$. This guarantees exponential growth as $\delta-1\geq 2$. \soda{The formal statement of this observation (Lemma~\ref{lem:phase_1}) follows by basic calculation and we therefore omit the proof in this conference paper.} 
\begin{lemma}
\label{lem:phase_1}
Let $t_1$ be the smallest integer such that $|\bar{N}_{t_1}|\geq \log^5 n$. It holds w.h.p that we can delay the DRP process so that $t_1=O((\log\log n)^2)$ and $|I_{t_1}|=O(\log^5 n)$. Moreover, if the the maximum degree $\Delta$ is constant, we can delay the DRP process so that $t_1=O(\log n)$ holds with probability $1-o(1/n)$. 
\end{lemma}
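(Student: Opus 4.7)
The plan is to couple the delayed random push (DRP) process with a supercritical Galton--Watson branching process, and then translate branching generations into push rounds. Throughout Phase~1 we will maintain $|I_t|\le |I_{t_1}|=O(\log^5 n)=o(\alpha n)$, so inequality \eqref{eq:prob_not_informed_small} applies: whenever deferred decisions force us to match a currently unmatched stub, the match lands on a previously uninformed vertex of degree at most $M$ with probability at least $1-\gamma$.

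Conceptually, I would grow a tree $T$ rooted at the initial vertex $v_0$. Every time a vertex $v$ is added to $T$ it has one already-matched ``incoming'' stub and at least $\delta-1\ge 2$ ``outgoing'' stubs; using the delay mechanism of DRP we reveal those outgoing stubs one by one (postponing any push attempt on a stub until we actually want to reveal it), and by \eqref{eq:prob_not_informed_small} each such reveal independently yields a new uninformed vertex of degree $\le M$ as a child of $v$ with probability $\ge 1-\gamma$. Thus $|T|$ stochastically dominates a Galton--Watson process with offspring distribution $\mathrm{Bin}(\delta-1,\,1-\gamma)$, whose mean $(\delta-1)(1-\gamma)\ge 2(1-\gamma)$ is strictly larger than~$1$ by our choice of $\gamma$ in \eqref{eq:cdondition_gamma}. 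Standard BP theory then gives a constant survival probability $p_0>0$, and conditioned on survival the population grows geometrically, so $|T|\ge \log^5 n$ after $k=O(\log\log n)$ generations. To amplify this to an event of probability $1-o(1)$ I would run $O(\log\log n)$ independent BP trials (either on disjoint subtrees of descendants, or by restarting from a fresh informed vertex each time extinction occurs), which pushes the failure probability below $o(1)$ at the cost of an extra $O(\log\log n)$ factor in the round count.

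To convert generations into actual push rounds, recall that every informed tracked vertex $v$ picks one of its $d_v\le M=O(1)$ stubs uniformly at random per round. Using delays we count only those pushes that select an unmatched outgoing stub of $v$; the time needed for $v$ to have all its $\delta-1$ outgoing stubs revealed is then a coupon-collector-type random variable with $O(1)$ expectation and tail at most $(\delta-1)(1-1/M)^T$ after $T$ rounds. Taking $T=\Theta(\log\log n)$ makes this tail $o(1/\log^{10} n)$, and a union bound over the at most $\log^5 n$ tree-vertices guarantees that every tracked vertex finishes its reveals within $O(\log\log n)$ rounds of being informed, yielding $t_1=O((\log\log n)^2)$. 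The side-bound $|I_{t_1}|=O(\log^5 n)$ then follows from $|I_{t+1}|\le 2|I_t|$ (each informed vertex contributes at most one new informed vertex per round) together with the geometric growth of the generations, which sum to $\Theta(\log^5 n)$. For the second part of the lemma, when $\Delta$ is independent of $n$ we simply enlarge every budget to $T=O(\log n)$ so that the same tail probabilities become $n^{-\Omega(1)}$, giving a union bound of failure probability $o(1/n)$ over the at most $n$ relevant vertices and all generations.

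The main obstacle will be bookkeeping the dependencies introduced by the configuration model's deferred decisions: the ``$\ge 1-\gamma$ per reveal'' guarantee in \eqref{eq:prob_not_informed_small} is conditional on the history, and we invoke it across $\Omega(\log^5 n)$ correlated reveal events. I expect the rigorous argument to couple the DRP reveal sequence with an i.i.d.\ $\mathrm{Bernoulli}(1-\gamma)$ sequence, absorbing the $(1-o(1))$ slack of \eqref{eq:prob_not_informed_small} into a slightly worse constant $\gamma'>\gamma$ that still satisfies \eqref{eq:cdondition_gamma}, so the dominating branching process remains supercritical throughout the phase.
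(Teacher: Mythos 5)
Your first statement (the w.h.p.\ bound $t_1=O((\log\log n)^2)$) is in the same spirit as the paper's proof --- both grow a tree by delayed stub reveals with per-level time budgets of order $\log\log n$ --- but your probabilistic engine differs: you dominate the tree by a Galton--Watson process with offspring $\mathrm{Bin}(\delta-1,1-\gamma)$ and amplify the constant survival probability by restarts, whereas the paper builds a $3$-regular tree with prescribed level sizes $\lceil(4/3)^{j-1}\rceil$ and bounds directly, by a binomial counting argument, the probability that a constant fraction of the branches on a level die. Your route is salvageable for the w.h.p.\ statement, though you gloss over two points: (i) after extinction of a dominating tree you must argue a fresh informed vertex with unmatched stubs exists (bad reveals landing on degree-$>M$ vertices do supply one, but pure back-match closure needs to be excluded); and (ii) your bound $|I_{t_1}|=O(\log^5 n)$ does not follow from ``$|I_{t+1}|\le 2|I_t|$'' --- doubling over $(\log\log n)^2$ rounds gives $2^{(\log\log n)^2}\gg\log^5 n$ --- it follows instead, as in the paper, from the fact that the delayed process performs only $O(\log^5 n)$ reveals in total.

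The genuine gap is in your second part. Enlarging every per-generation budget to $T=O(\log n)$ does \emph{not} make the failure probability $o(1/n)$, because the dominant failure mode of your coupling is extinction of the branching process, and its probability is a \emph{constant} determined by the offspring law $\mathrm{Bin}(\delta-1,1-\gamma)$ (e.g.\ both reveals of a vertex can be bad with probability up to $\gamma^2$); no time budget changes this. Beating it by restarts would need $\Theta(\log n)$ sequential trials, destroying the round count. Moreover, even the timing part fails: a uniform budget of $\Theta(\log n)$ rounds per generation over the $\Theta(\log\log n)$ generations needed to reach $\log^5 n$ vertices gives $t_1=\Theta(\log n\log\log n)$, exceeding the claimed $O(\log n)$. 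The paper avoids both problems by exploiting that in Phase~1 only $O(\log^5 n)$ vertices are informed, so a reveal is bad only via a back match, which for constant $\Delta$ has probability $O(2^j/n)$ --- polynomially small, not the constant $\gamma$ of \eqref{eq:prob_not_informed_small}, which you invoke throughout and which throws away exactly this polynomial smallness --- and by using \emph{decreasing} budgets $t_j=36\log n/(j^2\log(3/2))$, so that the total time telescopes to $O(\log n)$ while the early, vulnerable levels (where a single death is fatal) get failure probability $n^{-36/j^2}$, yielding overall failure $o(1/n)$ for a single tree with no amplification. (The constant-$\gamma$ bound is used in the paper only once, at the end, to ensure via Theorem~\ref{thm:chernoff} that at least $\log^5 n$ of the $\tfrac{4}{3}\log^5 n$ final-level vertices have degree at most $M$.)
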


The second phase is longer and its running time dominates the total running time of the rumor spreading. In this phase, we want to inform a linear fraction of the vertices, and we therefore have to deal with vertices that are informed more than once. \full{But since we have many vertices of degree larger than $\delta$, we can compensate for this and show that there is still exponential growth. }The proof of the following lemma contains the main ideas of our analysis. 
\begin{lemma}
\label{lem:phase_two}
Let $t_2$ be the smallest integer such that $|I_{t_2}|\geq \alpha n$. It holds with probability $1-o(1/n)$ that we can delay the DRP process so that $t_2-t_1\le c \ln n$. 
\end{lemma}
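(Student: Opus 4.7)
The plan is to analyse phase~2 of the DRP process as an essentially supercritical branching process with per-round growth factor $\rho := 2(1-1/\delta)$, yielding $t_2 - t_1 \le (1+o(1))(\ln n)/\ln\rho = (1+o(1))c_{\mathcal{D}}\ln n \le c\ln n$. Using deferred decisions, I attach to each informed vertex $v$ a state $s_v(i)$ equal to its number of matched stubs at round $i$, initially $s_v=1$ (the stub through which $v$ was informed). In round $i+1$, the push of $v$ picks a uniformly random stub of $v$, which is still unmatched with probability $(d_v-s_v(i))/d_v$; in that case I match it to a uniformly random unmatched stub in the graph, which by \eqref{eq:prob_not_informed_small} belongs to an uninformed vertex of degree at most $M$ with probability at least $1-\gamma$. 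Pushes through already-matched stubs are delayed and not counted, which is allowed since delaying only slows the process.

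In the ``tree regime,'' where informed vertices collide only rarely, $s_v$ grows solely through $v$'s own successful pushes, so the Markov chain analysis $\mathbb{E}[d_v - s_v(a)] = (d_v-1)((d_v-1)/d_v)^a$ gives that a degree-$d_v$ vertex of age $a$ has expected offspring rate $((d_v-1)/d_v)^{a+1} \ge ((\delta-1)/\delta)^{a+1}$. Weighting by the age distribution $(\rho-1)/\rho^{a+1}$ induced by geometric growth at rate $\rho$ yields the fixed-point equation
\[
1 \;=\; \frac{(1-\gamma)(\delta-1)}{\delta\rho - \delta + 1},
\]
whose solution $\rho = (2-\gamma)(1-1/\delta)$ can be made arbitrarily close to $2(1-1/\delta)$ by taking $\gamma$ small (which is why the paper chooses $\gamma$ as a sufficiently small constant at the outset). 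I would make the intuition rigorous by tracking the weighted potential $W_i := \sum_{a\ge 0}|\bar{N}_{i-a}|\cdot((\delta-1)/\delta)^{a}$ (whose weights are chosen so that $w_0 r_a + w_{a+1} = \rho w_a$ for the relevant offspring rate $r_a$) and proving $\mathbb{E}[W_{i+1}\mid\text{past}]\ge (1-o(1))\rho\, W_i$ as long as $|I_i|<\alpha n$, then iterating from $W_{t_1}\ge|\bar{N}_{t_1}|\ge\log^5 n$ up to $W_{t_2}=\Omega(n)$.

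For concentration I would apply Theorem~\ref{thm:chernoff} round by round to the indicators of successful pushes in a single round. Deferred decisions leave each newly matched stub uniform over the remaining unmatched stubs conditional on history, so the conditional success probability exceeds $(1-\gamma)(\delta-1)/\delta$ irrespective of the prior matching, verifying the theorem's hypothesis. The resulting Chernoff tail $\exp(-\Theta(|\bar{N}_i|))$ is $n^{-\omega(1)}$ once $|\bar{N}_i|\ge\log^5 n$, exactly the threshold supplied by Lemma~\ref{lem:phase_1}, so a union bound over the $O(\log n)$ rounds of phase~2 preserves the required $1-o(1/n)$ probability.

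The main obstacle is controlling the validity of the tree regime as $|I_i|$ approaches $\alpha n$. I would handle this by a direct count in the configuration model: while $|I_i|\le\alpha n$, the stubs belonging to informed vertices (of degree $\le M$) number at most $\alpha n\cdot M$, which by the choice of $\alpha$ in \eqref{eq:cdondition_alpha} is at most a small constant fraction of all $\Theta(n)$ stubs; thus any newly matched stub lands at another informed vertex with probability $O(\gamma)$, and the assumption $\Delta=o(\sqrt n)$ controls the birthday-type contributions of high-degree vertices that could otherwise spoil this estimate. These collision terms are already absorbed in the $(1-\gamma)$ factor of the fixed-point equation. A secondary technical issue is that $|\bar{N}_i|$ may fluctuate near its mean early in phase~2; the threshold $\log^5 n$ rather than just $\omega(1)$ at the end of phase~1 is tuned precisely so that the per-round Chernoff tail dominates these fluctuations from the very first round of phase~2.
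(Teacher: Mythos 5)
Your overall route---treating phase 2 as an age-structured branching process, deriving the Euler--Lotka fixed point $\rho=(2-\gamma)(1-1/\delta)$, and pushing a weighted potential $W_i$ from $\log^5 n$ to $\alpha n$---is genuinely different from the paper's argument and could in principle be made to work, but as written the concentration/drift step contains a concrete error. You claim that each push succeeds with conditional probability at least $(1-\gamma)(\delta-1)/\delta$ ``irrespective of the prior matching,'' so that Theorem~\ref{thm:chernoff} applies with a uniform $p$. That is false for every vertex of age $a\geq 1$: the probability that $v$'s push hits an unmatched stub is $(d_v-s_v(i))/d_v$, which is bounded below by $(\delta-1)/\delta$ only at age $0$ and decays geometrically with age---your own offspring-rate formula $((\delta-1)/\delta)^{a+1}$ says exactly this. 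For the same reason, the drift inequality $\E[W_{i+1}\mid \text{past}]\geq (1-o(1))\rho\, W_i$ does not follow from the cohort sizes $|\bar{N}_{i-a}|$ alone: conditional on the history, the expected offspring of a cohort is $(1-\gamma)\sum_v (d_v-s_v)/d_v$, a function of the \emph{realized} matched-stub counts. This matters precisely where phase 2 is hard, because back matches consume free stubs at informed vertices, so $s_v$ grows faster than your ``own pushes only'' Markov chain predicts; this degradation is \emph{not} ``already absorbed in the $(1-\gamma)$ factor''---that factor accounts for the pushing side of a back match (no new vertex informed) but not for the receiving side (a killed free stub lowering the target's future rate). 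To repair this you would have to work at the stub level, e.g.\ track the number of free stubs at informed vertices (which is exactly the quantity $|P_i^T|$ in the paper's tree process) and show that back matches remove only an extra $O(\gamma)$ fraction per round, and you would additionally need to decorrelate the within-vertex dependence (exactly one stub per vertex is selected per round, so stub-selection indicators at one vertex are mutually exclusive); the paper does this in the proof of Lemma~\ref{lem:tree_model} by partitioning the free stubs into $\delta-1$ classes with no two stubs of a vertex in the same class before applying Chernoff.

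A second, smaller point: even after these repairs your route only yields growth rate $(2-\gamma)(1-1/\delta)$ minus the extra $O(\gamma)$ loss above, so to beat a given $c>c_{\mathcal{D}}$ you must choose $\gamma$---and with it $M$ and $\alpha$ from \eqref{eq:cdondition_alpha}---as a function of $c$. This is legitimate, since the lemma quantifies over $c$ and phase 3 works for any constant $\alpha>0$, but it must be said explicitly, because the paper fixes $\gamma$ once via \eqref{eq:cdondition_gamma} and then achieves the \emph{exact} tree rate $2(1-1/\delta)$: it couples the DRP process to the $(\delta-1)$-ary tree process and pays for every bad match with a surplus selected stub of a degree-$>\delta$ vertex via the twin assignment in Algorithm~\ref{alg:coupled_matching}; the condition \eqref{eq:chernoff_const}, together with the bound $1-m_{i-1,\delta}=\Omega(1)$, is precisely the statement that this surplus covers the $\gamma$-fraction of bad matches, so that $|\bar{N}_i|=|N_i^T|$ holds exactly. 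Your fixed-point approach dispenses with the degree surplus at the price of the rate loss; the paper's coupling uses the surplus to avoid any loss. Either architecture can prove the lemma, but your concentration step must be rebuilt along the lines above before the argument stands.
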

In the third phase, we handle the spreading to the remaining vertices for random graphs of bounded and unbounded maximum degree. 
\begin{lemma}
\label{lem:phase_3}
For $\eps>0$, let $t_3$ be the first round such that $|I_{t_3}|>(1-\eps)n$ and let $t_4$ be the first round such that $|I_{t_4}|=n$. It holds w.h.p that we can delay the DRP process so that $t_3-t_2 =  o(\log n).$ Moreover, if $\Delta$ does not depend on $n$, then it holds with probability $(1-o(1/n))$ that we can delay the DRP process so that $t_4-t_2 =O(\log n).$
\end{lemma}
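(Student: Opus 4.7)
The plan is to split the proof of Lemma~\ref{lem:phase_3} into two sub-phases matching its two conclusions: first, driving the informed set from $\alpha n$ up to $(1-\eps)n$ (which will give $t_3-t_2=o(\log n)$); and second, in the bounded-degree case, cleaning up the remaining at most $\eps n$ vertices (which will give $t_4-t_2=O(\log n)$).

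For the first sub-phase I would argue that saturation occurs in a bounded number of rounds. Throughout this stretch the total number of stubs matched so far in the DRP process is still $O(n)$ (since the total number of stubs is $\Theta(n)$ by sparseness of $\mathcal{D}$), so the set $U$ of unmatched stubs keeps size $\Theta(n)$. In a single round the at least $\alpha n$ informed vertices each pick one of their $\geq \delta$ stubs uniformly; a constant fraction of those picks hit unmatched stubs, which are then matched to a uniform element of $U$. Because an uninformed vertex $v$ still has all of its $d(v)\geq\delta$ stubs in $U$, the expected number of pushes hitting $v$ in a round is $\Omega(1)$, and consequently the probability of being hit at least once is bounded below by a constant $p=p(\alpha,\delta)>0$. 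Applying the slightly dependent version of Theorem~\ref{thm:chernoff} to the indicators ``$v$ remains uninformed after round $i$'' shows that whp the uninformed set shrinks by a constant factor each round. Since $\eps$ is a constant, $O(\log(1/\eps))=O(1)=o(\log n)$ rounds then suffice.

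For the second sub-phase the assumption $\Delta=O(1)$ is used twice. First, I would establish an edge-expansion property of $M(\mathcal{D})$: with probability $1-o(1/n)$, for $\eps$ chosen small enough, every set $S$ with $1\leq|S|\leq\eps n$ satisfies $|E(S,V\setminus S)|\geq c|S|$ for some constant $c=c(\mathcal{D})>0$. This should follow from a first-moment calculation over the configuration model, bounding the expected number of ``bad'' sets of size $s$ by $\binom{n}{s}$ times the probability that a uniform matching places almost all of $S$'s $\leq s\Delta$ stubs inside $S$; with $\Delta$ constant and $\eps$ small, the sum over $s$ can be made $o(1/n)$. Second, on the good event that this expansion holds, whenever the uninformed set $U_t$ has size at most $\eps n$, at least a $c/\Delta$-fraction of its vertices have an informed neighbor, and each such vertex is pushed to with probability $\geq 1/\Delta=\Omega(1)$ per round. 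A direct application of Theorem~\ref{thm:chernoff} then yields a constant-factor shrinkage of $|U_t|$ per round with failure probability at most $n^{-2}$; a union bound over the $O(\log n)$ rounds drives $U_t$ to $\emptyset$ with probability $1-o(1/n)$.

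The main obstacle is sharpening the second sub-phase to the required $1-o(1/n)$ probability rather than the customary w.h.p.\ bound. Both the graph-theoretic expansion and the push-concentration argument have to tolerate failure probability $o(1/n)$: the former by careful first-moment bookkeeping that exploits the constant $\Delta$ to tame the binomial factors, and the latter by a union bound relying on driving the per-round per-vertex failure probability below $n^{-2}$. Once these are in hand, the first sub-phase is comparatively routine given the constant lower bound on the per-round infection probability of a fixed uninformed vertex.
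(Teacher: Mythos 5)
Your decomposition is genuinely different from the paper's. The paper proves a structural statement (Lemma~\ref{lem:lem_short_path}): from every uninformed vertex (in the unbounded-degree case, from all but $o(n)$ of them) there is a path of length $O(\log\log n)$ through vertices of degree at most $\log\log n$ ending in $I_{t_2}$; it then bounds the time for the rumor to traverse such a path and finishes with Markov's inequality, resp.\ a union bound over vertices. Your first sub-phase is a plausible alternative and, if completed, would even give $t_3-t_2=O_{\eps}(1)$ rather than the paper's $O((\log\log n)^3)$; but as written it has a gap precisely where $\Delta=\omega(1)$, i.e.\ precisely for the $t_3$ statement. You assert that a constant fraction of the picks made by the $\geq\alpha n$ informed vertices hit unmatched stubs, hence that a fixed uninformed $v$ receives $\Omega(1)$ expected fresh pushes per round. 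But informed vertices consume their stubs over time: the $\Omega(n)$ unmatched stubs one can guarantee at informed vertices (this itself needs an argument, which the paper supplies via back-match counting) may sit on as few as $\alpha n/\Delta=o(n)$ high-degree vertices, each contributing at most one push per round, so the number of fresh matches per round can be $o(n)$ and your constant $p$ evaporates. The repair is to use that, by Lemma~\ref{lem:tree_model}, the final round of phase 2 creates $\Theta(n)$ newly informed vertices of degree at most $M$ with all but one stub unmatched, and to maintain inductively that each subsequent round creates $\Omega(1)\cdot(\text{current uninformed count})$ such fresh vertices while at least $\eps n$ vertices remain uninformed; this bookkeeping is absent from your sketch (for constant $\Delta$ it is not needed, since stubs cannot concentrate).

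The second sub-phase contains the serious gap: your expansion lemma is false at the probability level you need, and no bookkeeping can rescue it. If the degree sequence contains $\Theta(n)$ vertices of degree $3$, the expected number of pairs of degree-$3$ vertices joined by a triple edge is $\Theta(n^2)\cdot\Theta(n^{-3})=\Theta(1/n)$; similarly, with $\Theta(n)$ vertices of degree $4$, a vertex carrying two self-loops appears with probability $\Theta(1/n)$. These are sets $S$ of size at most two with $E(S,V\setminus S)=\emptyset$ (they even disconnect the multigraph), so the failure probability of ``every $S$ with $|S|\le\eps n$ expands'' is $\Theta(1/n)$, not $o(1/n)$, for any constants $c,\eps$: your first-moment sum is dominated by the terms $s\in\{1,2\}$, which sit at $\Theta(1/n)$, not by the binomial factors that constant $\Delta$ tames, and shrinking $\eps$ does not exclude constant-size sets. (In fairness, the paper's own bounded-degree argument is cavalier about exactly these constant-size multi-edge obstructions --- its estimate $\Pr[|T_7|<3]=O(1/n^3)$ ignores, e.g., a triple edge at the root, an event of probability $\Theta(1/n^2)$ per vertex --- but its per-vertex construction fails only on such local events, whereas your global expansion statement is falsified outright.) A smaller, fixable slip: once the uninformed set $U_t$ (your notation) has size $O(\log n)$, Theorem~\ref{thm:chernoff} gives failure probability only $e^{-\Theta(|U_t|)}$, not $n^{-2}$, so per-round constant-factor shrinkage ``with failure probability $n^{-2}$'' cannot be iterated to the end; on the expansion event one should instead track $\mathbb{E}\left[|U_{t+1}|\right]\le(1-c/\Delta^2)|U_t|$ round by round and conclude after $O(\Delta^2c^{-1}\log n)$ rounds with Markov's inequality.
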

The three lemmas together clearly imply Theorem~\ref{thm:main_theorem}.\full{ We prove Lemma~\ref{lem:phase_1} in Section~\ref{sec:phase2}, Lemma~\ref{lem:phase_two} in Section~\ref{sec:phase2} and Lemma~\ref{lem:phase_3} in Section~\ref{sec:phase3}.}\soda{ In this conference paper, we focus on the proof of Lemma~\ref{lem:phase_two} (in Section~\ref{sec:phase2}). 
At the time this paper is published, a full version that contains all the proofs will be available on the second authors web site.} We believe that the presented proofs give a  good intuition on the dynamics of the rumor spreading process, and that the techniques we use could be helpful for the analysis of other processes on random multigraphs. 

\full{\subsection{Phase 1}\label{sec:phase1}
This section is devoted to the proof of Lemma~\ref{lem:phase_1}.
\begin{proof}[Proof of Lemma~\ref{lem:phase_1}]
We consider the process until for the first time at least $\log^5n$ vertices become newly informed. We control the process (by delaying pushes) so that the informed vertices build a 3-regular tree. Let $T_1:=\{v\}$, where $v$ is the initial vertex and for $j>1$, let $T_j$ denote the set of vertices on the $j$-th level of the tree. On level $j\geq1$, we delay all pushes for $t_j$ (to be defined later) rounds. For every vertex $v\in T_j$, let $E_v$ denote the set of unmatched stubs that have been selected by the push process during those $t_j$ rounds. We select from every set $E_{v}$ at most two (three on the root) stubs and match them u.a.r. to stubs in $U$. Finally, we inform the first $\lceil(4/3)^{j-1}\rceil$ (or all if there are less) vertices that have been matched to vertices in $T_{j}$, and denote the set of newly informed vertices by $T_{j+1}$. We iterate this process as long as $j\le h$ for
$$h:=\frac{5\log\log n}{\log (4/3)}+2.$$
Clearly, the time we need to build this tree is $\sum_{i=1}^ht_j$, and we claim that we can choose the $t_j$'s in a way that the statement of the lemma is satisfied. 

We say that a branch of the tree \emph{dies} if the corresponding stub matches to a vertex that is already informed or if a vertex does not select at least two new stubs in $t_j$ rounds. For level $j$, we observe that there are $|U|\geq (n-\sum_{i=1}^{j-1}|T_i|)3$ unmatched stubs and at most $(\sum_{i=1}^{j-1}|T_i|)\Delta$ of those stubs are already in the tree. Thus, the probability to match to a stub in the tree is at most
$$\frac{(\sum_{i=1}^{j-1}|T_i|)\Delta}{(n-\sum_{i=1}^{j-1}|T_i|)3}.$$
Moreover, the probability that a branch dies because some other stub on the same level matches to it is at most
$$\frac{2|T_j|}{(n-\sum_{i=1}^{j-1}|T_i|)3},$$
and the probability that a branch dies because the corresponding vertex did not select two stubs during the $t_j$ rounds is at most $\left(2/3\right)^{t_j}$.
Altogether, the probability that a branch at level $j$ dies is at most
$$\chi_j:= \frac{(\sum_{i=1}^{j-1}|T_i|)\Delta+ 2|T_j|}{(n-\sum_{i=1}^{j-1}|T_i|)3}+\left(\frac{2}{3}\right)^{t_j}.$$
Let 
$$p_j:= \Pr\left[|T_j|<\left(\frac{4}{3}\right)^{j-1}\middle| |T_{j-1}|\geq \left(\frac{4}{3}\right)^{j-2}\right],$$
and note that under the assumption of $|T_{j-1}|\geq \left(\frac{4}{3}\right)^{j-2}$, at least $\frac{2}{3}(\frac{4}{3})^{j-2}$ of the $2(\frac{4}{3})^{j-2}$ branches on level $j$ have to die, to satisfy the event $|T_j|<\left(\frac{4}{3}\right)^{j-1}$. The probability for this is at most
\begin{equation}\label{eq:p_j_first_lemma}
p_j\le \binom{2(\frac{4}{3})^{j-2}}{\frac{2}{3}(\frac{4}{3})^{j-2}}\cdot \chi_j^{\frac{2}{3}(\frac{4}{3})^{j-2}}\le 2^{2(\frac{4}{3})^{j-2}} \chi_j^{\frac{2}{3}(\frac{4}{3})^{j-2}}=\left(4\chi_j^{\frac{2}{3}}\right)^{(\frac{4}{3})^{j-2}}.
\end{equation}
Moreover, it is not very hard to see that $|T_3|<(4/3)^2$ holds only if at least three branches on the first two levels die. The probability for this is at most
\begin{equation}\label{eq:first_lem_initial_prob}
\Pr[|T_3|<(4/3)^2] =O(\chi_3^3).
\end{equation}
Combining \eqref{eq:p_j_first_lemma} and \eqref{eq:first_lem_initial_prob}, we conclude that the probability for $|T_h|< (\frac{4}{3})^{h-1}=\frac{4}{3}\log^5 n$ is at most
\begin{equation}
\label{eq:tree_conditional_sum}
\Pr[|T_3|<(4/3)^2] +\sum_{j=4}^h p_j \le O(\chi_3^3)+\sum_{j=4}^h\left(4\chi_j^{\frac{2}{3}}\right)^{(\frac{4}{3})^{j-2}}.
\end{equation}

With this at hand, we can prove the first statement of the lemma. Let $t_j=\frac{3\log\log n}{2\log(3/2)}$ for all $1\le j\le h$. Clearly, the number of rounds we need to build $T_h$ is 
$$\sum_{j=1}^ht_j=O((\log\log n)^2), $$ 
and, since
$$\chi_j= O\left(\frac{2^i}{\sqrt{n}}+\frac{1}{\log^{3/2}n}\right)=O\left(\frac{1}{\log^{3/2}n}\right),$$
 the probability that there are less than $\frac{4}{3}\log^5 n$ newly informed vertices in $T_h$ is therefore by \eqref{eq:tree_conditional_sum} at most $O(\log\log n/\log^{4/3} n)=o(1)$. It remains to show that at least $\log^5n$ of these vertices have degree at most $M$. It follows from \eqref{eq:prob_not_informed_small} that every newly added vertex is with probability at least $1-\gamma$ of degree at most $M$. Moreover, this probability bound holds independently of the degrees of all the other vertices in the tree. Thus, it follows from Theorem~\ref{thm:chernoff} that with probability $1-n^{-\omega(1)}$ at least $\log^5 n$ of the newly informed vertices are of degree at most $M$. 
 
 For bounded degree graphs, we set $t_j:=36\log n/(j^2\log (3/2))$ for $1\le j\le h$, and the time needed to build the tree is therefore at most
 $$\frac{36\log n}{\log(3/2)}\sum_{j=1}^h\frac{1}{j^2}=O(\log n).$$
 Moreover, since for $1\le j \le h$ we have that 
 $$\chi_j =O\left(\frac{2^j}{\sqrt{n}}\right)+O\left(n^{-\frac{36}{j^2}}\right), $$
 it is not hard to see that the probability that there are less than $\frac{4}{3}\log^5 n$ newly informed vertices in $T_h$ is by \eqref{eq:tree_conditional_sum} at most
 $$O\left(\chi_3^3\right)+\sum_{j=4}^h\left(4\chi_j^{2/3}\right)^{(4/3)^{j-2}}=o(1/n).$$
 Finally, since we only allow $\lceil(\frac{4}{3})^{j-1}\rceil$ vertices on level $j$ for $1\le j\le h$, the number of informed vertices is at most $O(\log^5n)$.
\end{proof}}

\subsection{Phase 2}\label{sec:phase2}
Lemma~\ref{lem:phase_two} states that the exponential growth, exploited in the first phase, continues until a linear fraction of the vertices is informed. 
Unfortunately, when $|I_i|$ is large, we can not neglect the probability that a selected stub is matched to an already informed vertex. We call this a \emph{back match}. 
On the other hand, since we have many vertices of degree strictly larger than $\delta\geq 3$, the branching factor is often larger than $\delta-1\geq 2$. 

First, we introduce a $(\delta-1)$-ary random tree process. It is not very hard to show that this tree process has exponential growth. Then, we couple the DRP process to the tree process. Our coupling guarantees that the number of newly informed vertices of degree at most $M$ is always exactly the number of new born vertices in the tree process, and we thereby prove an exponential lower bound on the growth of the DRP process. 

\subsubsection{The Tree Process}\label{sec:tree_process} The tree process with branching factor $\delta-1$ is defined as follows. 
Every vertex has $\delta$ stubs attached to it. A stub is either \emph{free}, if it has never been selected, or \emph{used}, if it is already matched. We start in round $t_1+1$ with $\lceil\log^5 n\rceil$ vertices, of which each one has exactly one stub that is used and the other $\delta-1$ stubs are free. Note that the time shift of $t_1$ is convenient to couple the tree process with phase 2 of the push process. In round $i>t_1$, we select at every vertex one stub u.a.r from all its stubs. We denote the set of free stubs that have been selected in round $i$  as $S_i^T$. Algorithm~\ref{alg:tree_algorithm} then iterates trough all stubs $e\in S_i^T$ and creates a new vertex $u$ at $e$.
\begin{algorithm}
\caption{Tree process for $S_i^T$}
\label{alg:tree_algorithm}
\begin{algorithmic}
\For{$e\in S_i^T$}
	\State Create a new vertex $u$ and match a stub of $u$ to $e$.
\EndFor
\end{algorithmic}
\end{algorithm}

For $i\geq t_1+1$, let $N^T_i$ be the set of newly created vertices in round $i$, and let $P^T_i$ be the set of free stubs after round $i$. The following lemma quantifies the exponential growth.
\begin{lemma}
\label{lem:tree_model}
Let $|N^T_{t_1}|= \lceil\log^5 n\rceil$ and let $\delta\geq 3$. It holds with probability $1-o(1/n)$ for all $t_1+1\le i =O(\log n)$ that 
\begin{equation}
\label{eq:stub_change}
 |P^T_i|  = (1\pm o(1)) |P^T_{t_1}| \left(2\left(1-\frac{1}{\delta}\right) \right)^{i-t_1}
\end{equation}
and 
\begin{equation}
\label{eq:newly_informed_change}
 |N^T_i|=(1\pm o(1))\frac{1}{\delta}|P_{i-1}^T|.
\end{equation}
\end{lemma}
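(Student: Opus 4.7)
The plan is to proceed by induction on $i$, applying a Chernoff bound per round to control $|N_i^T|$ (and hence $|P_i^T|$) conditional on the past state of the tree process.

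First I would record the exact one-step accounting. Let $f_v$ denote the number of free stubs at vertex $v$ at the start of round $i$. Since every existing vertex independently selects one of its $\delta$ stubs uniformly at random, the indicator that $v$ contributes a free stub to $S_i^T$ is a Bernoulli$(f_v/\delta)$ variable, and these indicators are mutually independent across $v$. Hence $|N_i^T|=|S_i^T|$ is a sum of independent Bernoullis with conditional mean
$$
\mathbb{E}[|N_i^T|\mid |P_{i-1}^T|]=\sum_v f_v/\delta=|P_{i-1}^T|/\delta.
$$
Because each selected free stub becomes used while the new vertex attached to it brings $\delta-1$ fresh free stubs, one has the deterministic identity $|P_i^T|=|P_{i-1}^T|+(\delta-2)|N_i^T|$, which yields $\mathbb{E}[|P_i^T|\mid |P_{i-1}^T|]=2(1-1/\delta)|P_{i-1}^T|$.

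Next I would apply Theorem~\ref{thm:chernoff} round by round with the single choice $\epsilon:=(\log n)^{-3/2}$. Under the inductive hypothesis $|P_{i-1}^T|\ge (1-o(1))|P_{t_1}^T|(2(1-1/\delta))^{i-1-t_1}=\Omega(\log^5 n)$, the conditional mean $|P_{i-1}^T|/\delta$ is still $\Omega(\log^5 n)$, so the probability that $|N_i^T|\notin (1\pm\epsilon)|P_{i-1}^T|/\delta$ is at most $e^{-\Omega(\log^5 n\cdot(\log n)^{-3})}=e^{-\Omega(\log^2 n)}$. A union bound over the $O(\log n)$ rounds under consideration gives total failure probability $o(1/n)$.

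Conditioning on this good event, the deterministic identity combined with the per-round concentration gives $|P_i^T|/|P_{i-1}^T|=1+(\delta-2)(1\pm\epsilon)/\delta=2(1-1/\delta)(1\pm O(\epsilon))$, and iterating over $k:=i-t_1=O(\log n)$ rounds yields $|P_i^T|=|P_{t_1}^T|(2(1-1/\delta))^k(1\pm O(k\epsilon))$. Since $k\epsilon=O((\log n)^{-1/2})=o(1)$, this establishes \eqref{eq:stub_change}, and then \eqref{eq:newly_informed_change} follows immediately from $|N_i^T|=(1\pm\epsilon)|P_{i-1}^T|/\delta$. The main technical point is the calibration of $\epsilon$: it must be small enough that $(1+\epsilon)^{O(\log n)}=1+o(1)$ yet large enough that the per-round Chernoff tail is $o(1/(n\log n))$, and the initial budget $\Theta(\log^5 n)$ is exactly what permits a single constant-free choice such as $(\log n)^{-3/2}$ to satisfy both.
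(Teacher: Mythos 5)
Your proof is correct, but it takes a genuinely different (and arguably cleaner) route at the key technical point. The paper also computes $\mathbb{E}\bigl[|P_i^T|\mid |P_{i-1}^T|\bigr]=2(1-\tfrac1\delta)|P_{i-1}^T|$ and $\mathbb{E}\bigl[|N_i^T|\mid |P_{i-1}^T|\bigr]=|P_{i-1}^T|/\delta$ and then does a per-round concentration plus union bound over $O(\log n)$ rounds, with per-round accuracy $1\pm O(1/\log^2 n)$ so that the errors compound to $1\pm o(1)$ --- structurally the same as your calibration of $\eps=(\log n)^{-3/2}$. Where you diverge is in how independence is obtained: the paper reasons per stub, notes that the indicators of stubs at the same vertex are dependent (exactly one stub per vertex is selected), and repairs this by partitioning $P_{i-1}^T$ into $\delta-1$ classes containing no two stubs of the same vertex, applying Chernoff to each class and tracking several random variables per round. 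You instead aggregate at the vertex level: the indicator that $v$ selects one of its $f_v$ free stubs is Bernoulli$(f_v/\delta)$, and these are automatically independent across vertices, so $|N_i^T|$ is directly a sum of independent Bernoullis; moreover your deterministic identity $|P_i^T|=|P_{i-1}^T|+(\delta-2)|N_i^T|$ eliminates the need to separately concentrate the number of stubs remaining free, reducing everything to one random variable per round. This buys a shorter argument with no partitioning device; the paper's stub-level partition is more ad hoc but generalizes in settings where per-vertex aggregation is unavailable. One cosmetic caveat: your Bernoulli parameters $f_v/\delta$ are heterogeneous, so Theorem~\ref{thm:chernoff} as stated (with uniform bounds $p,q$ on the conditional means) does not literally apply around the mean $\mu=|P_{i-1}^T|/\delta$; you should instead invoke the standard Chernoff--Hoeffding bound for sums of independent, not necessarily identically distributed, Bernoulli variables (the form the paper cites from Chapter~1 of the Dubhashi--Panconesi reference), applied conditionally on the history. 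With that citation fixed, your induction, the tail bound $e^{-\Omega(\log^2 n)}$ per round, and the accumulation $k\eps=o(1)$ are all sound.
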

\soda{In this conference paper, we omit the proof of this rather technical lemma.}
\full{
\begin{proof}
In every round $i$, a free stub $e\in P_{i-1}^T$ creates $\delta-1$ new stubs with probability $1/\delta$, or remains free with probability $(\delta-1)/\delta$. This observation already implies that the expectation of $|P_i^T|$ conditioned on $|P_{i-1}^T|$ is
$$\mathbb{E}\left[|P_i^T|\middle| |P_{i-1}^T|\right]  =  |P^T_{i-1}| \left(\frac{2(\delta-1)}{\delta} \right)$$
and that the expected number of new vertices in round $i$ given the number of free stubs in round $i-1$ is 
 $$\mathbb{E}\left[|N_i^T|\middle| |P_{i-1}^T|\right]  =  |P^T_{i-1}| \frac{1}{\delta}.$$
We start with $|P^T_{t_1}|\geq (\delta-1)|N^T_{t_1}|$ free stubs and it therefore remains to show that $|P_i^T|$ and $|N_i^T|$ are for all $t_1<i=O(\log n)$ sufficiently concentrated around its expectation. Since exactly one stub at a vertex is selected in every round, the events are far from independent. To gain independence, we distribute the free stubs in $P_{i-1}$ in $\delta-1$ sets $P_{i-1,1}, \dots, P_{i-1,\delta-1}$ such that every set has either cardinality $\lfloor |P_{i-1}^t|/(\delta-1)\rfloor$ or $\lfloor |P_{i-1}^t|/(\delta-1)\rfloor+1$, and such that no two stubs of the same vertex are in the same set. Then, we apply the Chernoff bound (see Theorem~\ref{thm:chernoff}) to show that with probability $1-O(n^{-c})$ (where we can choose the $c$ arbitrarily large) for all those sets the number of stubs that create a new vertex and the number of stubs that remain free, are within a factor $(1\pm O(1/\log^2 n))$ of their expectation. The lemma follows by a union bound, since we have $3 (\delta-1)$ random variables, $O(\log n)$ rounds, and since
$$(1\pm O(1/\log^2n))^{O(\log n)}=(1\pm o(1)).$$
\soda{\qed}
\end{proof}
}We remark that Lemma~\ref{lem:tree_model} implies that
\begin{align*}
|N_i^T| &\geq (1\pm o(1))\frac{1}{\delta}|P_{i-1}^T|=(1\pm o(1)) \frac{1}{\delta}\left(2\left(1-\frac{1}{\delta}\right) \right)^{i-1-t_1} |P^T_{t_1}|\\
&\geq (1\pm o(1))\frac{(\delta-1)}{\delta}\left(2\left(1-\frac{1}{\delta} \right) \right)^{i-1-t_1}\log^5 n
\end{align*}
holds with probability $1-o(1/n)$ for all $t_1<i\le O(\log n)$. Recall that $c_{\mathcal{D}}= \frac{1}{\ln (2(1-\frac{1}{\delta}))}$, 
and note that the inequality
\begin{equation}
\label{eq:p2_time}
t_1+ c_{\mathcal{D}} (\ln n-5\ln\log n)\stackrel{\eqref{eq:cdondition_gamma}}{\le} t_1+c\ln n
\end{equation}
holds for $n$ large enough.
Therefore, for $i\geq t_1+c\ln n$, we have with probability $1-o(1/n)$ 
\begin{align}
|N_i^T|&\geq (1-o(1))\frac{\delta-1}{\delta}\left(2\left(1-\frac{1}{\delta}\right)\right)^{t_1+c_{\mathcal{D}}(\ln n-5\ln \log n)-1-t_1}\log^5n   \nonumber  \\
\label{eq:p2_newly_informed} 
&\geq (1-o(1)) \frac{(\delta-1)}{2\delta(1-\frac{1}{\delta})} n > \frac{(\delta-1)}{4\delta(1-\frac{1}{\delta})} n   \stackrel{\eqref{eq:cdondition_alpha}}{\geq} \alpha n.
\end{align}
\subsubsection{Coupling the two Processes}
\label{sec:coupling}
In order to apply the bound derived in \eqref{eq:p2_time} and \eqref{eq:p2_newly_informed} to $t_2$, we couple the DRP process to the tree process, in a way that $|\bar{N}_i|=|N^T_i|$ holds after every round $i\geq t_1$. 
We first explain the coupling and show then that we can run it long enough to inform at least $\alpha n$ vertices.

Recall that we consider the random graph process $M(\mathcal{D})$ with degree sequence $D_n$ and vertex set $V$,  where every vertex $v\in V$ has $d_v$ stubs attached. 
In round $i\geq t_1+1$, the DRP process selects at every informed vertex $v\in I_i$ one of the $d_v$ stubs uniformly at random. We use the same definition of free and used stubs as in the tree process. However, the fundamental difference to the tree processes is that a free stub $e'$ can become the target of a back match. In this case, $e'$ is matched without being able to increase the number of informed vertices. But since the degree of most of the vertices is larger than $\delta$, we have in every round more stubs that are selected for the first time than in the tree model. Therefore, we can assign to every target of a back match $e'$ another selected (and delayed stub) $twin(e')$ that can spread the rumor instead of $e'$, as soon as $e'$ is selected in for the first time. Moreover, we ignore all vertices of degree larger than $M$, as informing such vertices would increase the probability of back matches in later rounds. Whenever we match a free selected stub $e$ to a stub $e'\in_rU$, we say that $e'$ is \emph{good} if it is attached to an uninformed vertex of degree at most $M$. 

Recall that $\bar{N}_{i}$ denotes the set of non-sleeping vertices of degree at most $M$ that have been newly informed in round $i$.
We start the coupling with $\lceil\log^5 n\rceil$ vertices in $\bar{N}_{t_1}$ at time $t_1+1$. Note that we can do so by delaying the stubs at all but $\lceil\log^5 n\rceil$ vertices in $\bar{N}_{t_1}$ forever. Clearly, for $i=t_1$ it holds that $|\bar{N}_i|=|N^T_i|$, and our coupling will maintain this invariant. At time $i>t_1$ we select at every vertex $v\in \cup_{j=t_1}^{i-1}\bar{N}_{j}$ one stub u.a.r. and define $\bar{S}_i$ to be the set of all free stubs (or twins of free stubs) that have been selected. For an exact simulation of the DRP process, we have to match or delay all stubs in $\bar{S}_i$. We couple this decision with the random tree process, by first running round $i$ of the tree process and then simulating the DRP process, so that we inform exactly $|N_i^T|$ new vertices. 

Let $\bar{S}_i$ and $S^T_i$ be the sets of free stubs that are selected in round $i$ of the DRP process and the tree process. We first run Algorithm~\ref{alg:tree_algorithm} on $S^T_i$ and then Algorithm~\ref{alg:coupled_matching} on $\bar{S_i}$.  Algorithm~\ref{alg:coupled_matching} iteratively selects a stub $e$ from $\bar{S}_i$ and matches it to a randomly chosen stub $e'\in_R U$. If $e'$ is good, it informs the vertex at $e'$, otherwise it assigns a stub from $\bar{S}_i$ as twin to $e'$. The algorithm terminates as soon as the number of newly informed good vertices is exactly the number of new vertices in the tree model. 

\begin{algorithm}
\caption{Coupled DRP}
\label{alg:coupled_matching}
\begin{algorithmic}
\While{$|\bar{N}_i|<|N^T_i|$}
\State remove an arbitrary stub $e$ from $\bar{S}_i$.
\State choose $e'\in_R U$ and match $e$ to $e'$.
\If{$e'$ is good}
	\State inform the vertex at $e'$.
\Else
	\State remove an arbitrary stub $e_t$ from $\bar{S}_i$ and set $e_t$ sleeping.
	\State $twin(e'):=e_t$.
\EndIf
\EndWhile
\end{algorithmic}
\end{algorithm}

Clearly, Algorithm~\ref{alg:coupled_matching} assures that, as long as $\bar{S}_i$ is large enough, $|\bar{N}_i|$ is always equal to $|N^T_i|$, and we can therefore apply the bounds given by Lemma~\ref{lem:tree_model} to $|\bar{N}_i|$. The following lemma states that as long as we are in Phase 2, we always have enough free stubs in $\bar{S}_i$.
\begin{lemma}
\label{lem:coupling}
The condition $\bar{S}_i\ne \emptyset$ is with probability $1-o(1/n)$ never violated when running Algorithm~\ref{alg:coupled_matching} with $|I_i|<\alpha n$. 
\end{lemma}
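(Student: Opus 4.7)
\textit{Proof plan.}  The plan is to fix a round $i$ of phase~2 (so $|I_i|<\alpha n$), show that Algorithm~\ref{alg:coupled_matching} completes round $i$ without exhausting $\bar S_i$ with probability $1-o(1/n^2)$, and then union-bound over the $O(\log n)$ rounds of phase~2.  On the demand side, equation~\eqref{eq:prob_not_informed_small} guarantees that a stub drawn uniformly from $U$ is good with probability at least $1-\gamma$, so each iteration of Algorithm~\ref{alg:coupled_matching} is a forward match with probability at least $1-\gamma$.  Applying Theorem~\ref{thm:chernoff} to the resulting (conditional) Bernoulli sequence gives that producing $|N^T_i|$ new informs requires at most $|N^T_i|(1+O(\gamma))$ iterations, and consequently consumes at most $(1+O(\gamma))|N^T_i|$ stubs from $\bar S_i$, with failure probability $o(1/n^2)$.

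For the supply, each of the $|V_i|=\sum_{j=t_1}^{i-1}|\bar N_j|$ active vertices independently picks one of its $d_v$ stubs uniformly at random.  A vertex $v\in\bar N_{i-k}$ has made $k-1$ such picks since becoming informed, so a short induction yields $\mathbb{E}[\text{free stubs at }v]=(d_v-1)\bigl((d_v-1)/d_v\bigr)^{k-1}$ and hence $\Pr[v\text{ contributes to }\bar S_i]=\bigl((d_v-1)/d_v\bigr)^{k}$.  Combining this with $|\bar N_{i-k}|\approx(\delta/(2(\delta-1)))^{k-1}|\bar N_{i-1}|$ from Lemma~\ref{lem:tree_model}, the expected contribution of a degree-$d$ vertex summed over $k\ge 1$ forms a geometric series with ratio $(d-1)\delta/(2d(\delta-1))$, whose value equals exactly $1/2$ at $d=\delta$ (total $2(\delta-1)/\delta\cdot|\bar N_{i-1}|\approx|N^T_i|$) and is strictly larger for $d>\delta$.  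Since $\lambda_\delta<1$, a constant fraction of the informed vertices have $d_v>\delta$, so $\mathbb{E}[|\bar S_i|]\ge(1+c)|N^T_i|$ for some constant $c=c(\mathcal{D})>0$.  Theorem~\ref{thm:bounded_differences}, applied to the $|V_i|\ge\log^5 n$ independent selection variables (each changing $|\bar S_i|$ by at most~$1$), concentrates $|\bar S_i|$ to within a $(1\pm o(1))$ factor of its expectation with failure probability $o(1/n^2)$.  Picking $\gamma$ small enough that $1+c/2>1+O(\gamma)$ (compatible with~\eqref{eq:cdondition_gamma}) then makes supply beat demand, and a union bound over the $O(\log n)$ rounds of phase~2 finishes the proof.

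The main obstacle is precisely the supply--demand inequality: the calculation above assuming every informed vertex has degree exactly $\delta$ matches the demand \emph{exactly}, with no slack.  The proof must therefore genuinely exploit the strict inequality $\lambda_\delta<1$ to extract a constant-factor margin; this is most delicate at $\delta=3$, where the cohort size $|\bar N_{i-k}|$ decays only by $(3/4)^{k-1}$ while the per-vertex free-stub probability $(2/3)^k$ decays faster, so only the first one or two cohorts contribute appreciably.  A backup argument is to also credit twin stubs activated in round~$i$: each back match from a previous round places a sleeping stub whose parent sits at an informed vertex of degree at most~$M$ and is re-selected in $O(M)$ expected rounds, amortizing one of the two stubs spent on that back match and providing a comparable constant-factor boost.
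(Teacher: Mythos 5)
Your plan follows the paper's own proof essentially step for step: a per-round analysis with failure probability $o(1/n^2)$ union-bounded over the $O(\log n)$ rounds of phase 2, the demand side controlled by the good-match probability $1-\gamma$ from \eqref{eq:prob_not_informed_small} via Theorem~\ref{thm:chernoff} (so that $|N_i^T|$ informs cost $(1+O(\gamma))|N_i^T|$ stubs including twins), the supply side computed from the exact first-time-selection probabilities $\left(\frac{d-1}{d}\right)^k$ and concentrated by the method of bounded differences (Theorem~\ref{thm:bounded_differences}), and the decisive constant-factor margin extracted from $\lambda_\delta<1$ --- which is precisely what the paper's bound $m_{i-1,\delta}\le 1-\frac{1-\alpha-\lambda_\delta}{2(1-\alpha)}$, proved by a Chernoff argument using $\alpha\le\frac{1-\lambda_\delta}{1+\lambda_\delta}$ from \eqref{eq:cdondition_alpha}, delivers. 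The only differences are cosmetic: the paper bounds the surplus $|S_i|=|\bar S_i|-|S_i^T|$ over the coupled tree process and keeps only the last cohort $t=i-1$ (which sidesteps your $\delta=3$ concern about the geometric series, and makes your twin-based backup argument unnecessary), while your one unproved assertion --- that a constant fraction of the \emph{newly informed} vertices have degree strictly greater than $\delta$ despite the depletion of the uninformed pool --- is exactly the step the paper carries out explicitly in \eqref{eq:m_bound}.
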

Before we prove Lemma~\ref{lem:coupling}, we show how it implies this section's main lemma.  
\begin{proof}[Proof of Lemma~\ref{lem:phase_two}]
It is not very hard to see that Algorithm~\ref{alg:coupled_matching} correctly simulates the DRP process. The lemma therefore follows from Lemma~\ref{lem:coupling}, since $\alpha n\le  |N_{t_1+c\cdot \ln n}^T|$, as we already mentioned in \eqref{eq:p2_time}, holds with probability $1-o(1/n)$ and since $|N^T_i|=|\bar{N}_i|\le |I_i| $ holds for every $i\geq t_1+1$ for which $|I_i|\le \alpha n$.
\end{proof}

It remains to prove Lemma~\ref{lem:coupling}. 

\begin{proof}[Proof of Lemma~\ref{lem:coupling}]
We assume during the whole proof that \eqref{eq:stub_change} and \eqref{eq:newly_informed_change} hold for all considered integers $i$.  
We fix a round $t_1< i \le O(\log n)$ and assume that the statement was true for all previous rounds. Based on this assumption, we prove that the statement of the lemma holds with probability $1-o(1/n^2)$ for round $i$. The lemma then follows by induction (the statement is by definition true for $i=t_1$) and union bound.

Let  $\bar{N}_{i,k}:=\left\{v\in \bar{N}_i \middle| deg(v)=k\right\}$ and let $m_{i,k}:= |\bar{N}_{i,k}|/|\bar{N}_i|$. 
We claim that the expected number of selected stubs in round $i$ is
$$\sum_{t=t_1}^{i-1}|N_t|\left( \frac{\delta-1}{\delta} \right)^{i-t}
\quad \text{ in the tree model, and} \quad
\sum_{t=t_1}^{i-1}\sum_{k=\delta}^{M}|\bar{N}_{t}|\cdot m_{t,k}\left( \frac{k-1}{k} \right)^{i-t} $$

in the push process.
Indeed, let  $v$ be a vertex of degree $k$ that has been informed in round $t$. It is not very hard to see that the probability that one of the stubs at $v$ is selected for the first time in round $t+j$ is exactly $\left(\frac{k-1}{k}\right)^j$. 
Hence, the expectation of $|S_i|:=|\bar{S}_i|-|S_i^T|$ is at least 

\begin{align*}
&\sum_{t=t_1}^{i-1}\sum_{k=\delta}^M |\bar{N}_t|\cdot m_{t,k}\left[\left(\frac{k-1}{k}\right)^{i-t} -\left(\frac{\delta-1}{\delta}\right)^{i-t} \right]\geq \sum_{k=\delta}^M |\bar{N}_{i-1}|\cdot m_{i-1,k}\left[\frac{k-1}{k} -\frac{\delta-1}{\delta} \right]\\
&\geq \sum_{k=\delta+1}^M |\bar{N}_{i-1}|\cdot m_{i-1,k}\left[\frac{\delta}{\delta+1} -\frac{\delta-1}{\delta} \right]\geq \frac{1}{\delta^2+\delta} |\bar{N}_{i-1}|(1-m_{i-1, \delta}).
\end{align*}
If we consider the random variable $|S_i|$ in the setting of Theorem~\ref{lem:concentration}, then the coordinates are the outcome of the random experiments that select a stub at every vertex (in both processes) in every round. The effect of one coordinate $d_i$ is therefore at most one, and the number of coordinates that can contribute is at most 
$$2\left|\bigcup_{j=t_1}^{i-1}N^T_j \right| \cdot i = O\left(|N^T_{i-1}|\log n\right),$$
where the equality follows by Lemma~\ref{lem:tree_model} and the induction hypothesis.
Hence, by Theorem~\ref{lem:concentration} and since $|N^T_{i-1}|\geq \log^5n$, it follows that 
\begin{equation}
\label{eq:w.h.p_si}
\Pr\left[|S_i|\le \frac{1}{2(\delta^2+\delta)} |N^T_{i-1}|(1-m_{i-1, \delta})\right]=n^{-\Omega(\log^3 n)}=o(1/n^2)
\end{equation}
as long as $1-m_{i-1,\delta}=\Omega(1)$, which we will prove later. For now, we assume that $|S_i|\geq \frac{1}{2(\delta^2+\delta)} |N^T_{i-1}|(1-m_{i-1, \delta})$.

Note that $|S_i^T|$ is at least $|N_i^T|$ and that therefore 
$$|\bar{S_i}|\geq |N_i^T|+|S_i|\geq|N_i^T|+ \frac{1}{2(\delta^2+\delta)} |N^T_{i-1}|(1-m_{i-1, \delta}).$$ 
Whenever we match a new stub $e$ of $\bar{S}_i$ to a stub $e'\in_RU$, the probability that $e'$ is good is, by \eqref{eq:prob_not_informed_small}, at least $1-\gamma$. Suppose that we match $|N^T_{i}|+|S_i|/2$ times a stub of $\bar{S}_i$, and let $X$ count the number of times we match to a good stub. We will show that  $X$ is with probability $1-o(1/n^2)$ at least $|N_i^T|$, and thus $\bar{S}_i$ becomes never empty, since we match at most  $|N_i^T|+|S_i|/2-X\le |S_i|/2$ many times to stubs that are not good, so that the remaining $|S_i|/2$ stubs suffice as twins. 

It follows from Lemma~\ref{lem:tree_model} that 
\begin{align*}
|N^T_{i}|+|S_i|/2&\geq |N^T_{i}|+\frac{|N^T_{i-1}|(1-m_{i-1, \delta})}{4(\delta^2+\delta)}  \geq |N^T_i|\left(1+\frac{1-m_{i-1, \delta}}{8(\delta+1) (2\delta-2)} \right)\\
&\geq |N^T_i| \left(1+\frac{1-m_{i-1, \delta}}{16\delta^2} \right),
\end{align*}
and the expected number of matched stubs that are good is therefore at least
\begin{equation}
\mathbb{E}[X]  \stackrel{\eqref{eq:prob_not_informed_small}}{\geq} (1-\gamma)|N^T_i|  \left(1+\frac{1-m_{i-1, \delta}}{16\delta^2} \right).
\end{equation}
Note that each stub is matched to a good stub with probability at least $(1-\gamma)$ (independently of the other matches), and that $|N^T_i|=\Omega(\log^5n)$. In order to prove (by Theorem~\ref{thm:chernoff}) that $X\geq|N^T_i|$ holds with probability $1-n^{-\Omega(\log^4 n)}$, it therefore suffices to show that
\begin{equation}
\label{eq:chernoff_const}
(1-\gamma)\left( 1+(1-m_{i, \delta})\frac{1}{16\delta^2}\right) \geq1+\gamma.
\end{equation}
Recall that $m_{i-1, \delta}=|\bar{N}_{i-1, \delta}|/|\bar{N}_{i-1}|$ and observe that $|\bar{N}_{i-1, \delta}|$ is the sum of $|\bar{N}_{i-1}|$ indicator random variables, which are one with probability at most
\begin{align*}
\frac{D_{\delta} \delta}{(n-|I_{i-1}|-D_{\le \delta})(\delta+1)+D_{\delta}\delta} \le 1-\frac{n-|I_{i-1}|-D_{\le \delta}}{n-|I_{i-1}|}\le 1-\frac{1-\alpha-(1+o(1))\lambda_{\delta}}{1-\alpha},
\end{align*} 
where $D_{\delta}:=|\left\{j \mid  d_{j,n}=\delta\right\}|$ and $D_{\le \delta}:=|\left\{j \mid  d_{j,n}\le\delta\right\}|$.
As this bound holds for every i.r.v. deterministically, we conclude by Theorem~\ref{thm:chernoff} that 
$$\Pr\left[|\bar{N}_{i-1,\delta}|\geq \left(1-\frac{1-\alpha-\lambda_{\delta}}{2(1-\alpha)}|\bar{N}_{i-1}|\right)\right]\le n^{-\Omega(\log^4 n)}.$$
It follows that with probability $1-o(1/n^2)$
\begin{equation}
\label{eq:m_bound}
 m_{i-1, \delta}\le 1- \frac{1-\alpha -\lambda_{\delta}}{2(1-\alpha)} \stackrel{\eqref{eq:cdondition_alpha}}{=}\Omega(1),
\end{equation}
which settles \eqref{eq:w.h.p_si}.
Recalling $\alpha\le (1-\lambda_{\delta})/(1+\lambda_{\delta})$ from \eqref{eq:cdondition_alpha} we finish the proof by showing \eqref{eq:chernoff_const} as follows
\begin{align*}
(1-\gamma)\left( 1+(1-m_{i-1, \delta})\frac{1}{16\delta^2}\right) \stackrel{\eqref{eq:m_bound}}{\geq} (1-\gamma)\left( 1+\left(\frac{1}{2}-\frac{\lambda_{\delta}}{2(1-\alpha)}\right)\frac{1}{16\delta^2}\right) \\
\stackrel{\eqref{eq:cdondition_alpha}}{\geq} (1-\gamma)\left( 1+\frac{(1-\lambda_{\delta})}{64\delta^2}\right) \stackrel{\eqref{eq:cdondition_gamma}}{\geq} (1-\gamma)\left(1+\frac{2\gamma}{(1-\gamma)} \right)=1+\gamma. 
\end{align*}

\end{proof}
\full{\subsection{Phase 3}
\label{sec:phase3}
\soda{\vspace{-1mm}}
The following lemma states that for a fixed set of informed vertices $I_{t_1}\subset V$ of size $\lceil \alpha n \rceil$, there exists w.h.p for almost every vertex in $V\setminus I_{t_1}$ a short path that contains only vertices of small degree and ends at a vertex in $I_{t_1}$. 
\begin{lemma}
\label{lem:lem_short_path}
Let $t_2$ such that $|I_{t_2}|=\lceil \alpha n \rceil$ and let $\eps>0$. Then, there exist w.h.p for all but at most $o(n)$ vertices $v\in V\setminus I_{t_2}$ a path $v,v_1,  \dots, v_k$ such that 
\begin{enumerate}
\item $v_k\in I_{t_2}$
\item $k\in O(\log\log n)$
\item For all $1\le i \le k$ it holds that $deg(v_i) \le \log \log n$.
\end{enumerate}
If the degree sequence is bounded with constant maximum degree $\Delta$, then there exists with probability $1-o(1/n)$ for every vertex $v\in V\setminus I_{t_2}$ a path $v,v_1,  \dots, v_k$ with properties 1. and 2.
\end{lemma}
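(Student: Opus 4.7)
The plan is to carry out, for each uninformed vertex $v$, a breadth-first search (BFS) through the subgraph of vertices of degree at most $L := \log\log n$, using the principle of deferred decisions on the configuration model, and to show that this BFS reaches a low-degree vertex of $I_{t_2}$ within $O(\log\log n)$ levels with high probability. First, I would record the stub-pool invariants at time $t_2$. Since phases 1 and 2 match only $O(n)$ stubs in total (because $\sum_{t\le t_2}|I_t|=O(\alpha n)$), a constant fraction of the configuration's stubs remains unmatched. Combined with $|I_{t_2}|=\lceil\alpha n\rceil$, $\delta\geq3$, the fact that phase 2 concentrates on degrees $\leq M$, and \eqref{eq:only_few_large}, this yields (i) a constant fraction $\alpha'>0$ of the unmatched stubs incident to low-degree vertices of $I_{t_2}$, (ii) a fraction $\beta\geq1-\alpha-o(1)$ incident to low-degree vertices of $V\setminus I_{t_2}$, and (iii) only an $o(1)$ fraction incident to vertices of degree $>L$. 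In particular, $(\delta-1)\beta\geq 2(1-\alpha)-o(1)\geq 3/2-o(1)$, since $\alpha\leq 1/4$ by \eqref{eq:cdondition_alpha}.

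For a fixed uninformed starting vertex $v$, I would run the BFS restricted to degree-$\leq L$ vertices by repeatedly selecting an unrevealed frontier stub and matching it to a uniformly random unmatched stub. Each such match is classified as \emph{success} (lands at a low-degree vertex of $I_{t_2}$, terminating the BFS), \emph{growth} (lands at a fresh low-degree uninformed vertex, pushed onto the next frontier), or \emph{dead} (high-degree target, back-match, or already-visited vertex). While the BFS tree stays of polylogarithmic size, the back-match probability per match is $O(\mathrm{polylog}(n)/n)=o(1)$, so each match independently contributes success with probability $\geq\alpha'(1-o(1))$ and growth with probability $\geq\beta(1-o(1))$. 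Every frontier vertex contributes $\geq\delta-1\geq 2$ stubs to the next level, so the expected branching factor per level is at least $(\delta-1)\beta\geq 3/2-o(1)$.

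Applying Theorem~\ref{thm:chernoff} level by level, I would show that, conditioned on no success so far, the frontier grows by a factor $\geq 5/4$ per level with probability $1-n^{-\omega(1)}$ once it has reached size $\log^2 n$; the initial expansion from $v$ to this size takes $O(\log\log n)$ levels by a simple branching-process argument seeded at $v$. After $k=C\log\log n$ further levels with $C$ chosen large enough, either a success has already occurred or the frontier has $(\log n)^{\Omega(C)}$ vertices, each contributing $\geq\delta-1$ unrevealed stubs. The probability that none of these stubs matches to a low-degree vertex of $I_{t_2}$ is at most $(1-\alpha')^{(\log n)^{\Omega(C)}}=n^{-\omega(1)}$, so the per-vertex failure probability is $n^{-\omega(1)}$ and any resulting path has length $O(\log\log n)$.

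In the bounded-degree case, $\Delta$ is a constant, so $\Delta\leq L$ for large $n$ and condition~3 is automatic; a union bound over all $n$ starting vertices then yields overall failure probability $o(1/n)$, which is exactly the required bound. In the general sparse case, the $o(n)$ vertices of degree $>L$ are simply listed as exceptions; for each remaining starting vertex the per-vertex failure probability $n^{-\omega(1)}$ together with Markov's inequality shows that the number of failing starting vertices is $o(n)$ with high probability. The main technical obstacle is the dependency between BFS explorations launched from distinct starting vertices, which share the common pool of unmatched stubs. I would address this either by (a) invoking the union bound directly on the graph-level event ``vertex $v$ has no short low-degree path to $I_{t_2}$'', which depends only on the random matching and not on any order of exploration, or by (b) running the BFS explorations sequentially and verifying that the invariants (i)-(iii) above survive throughout, since each BFS consumes only $\mathrm{polylog}(n)$ matches while the pool starts with $\Theta(n)$ unmatched stubs.
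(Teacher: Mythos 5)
Your skeleton mirrors the paper's proof quite closely: reserve $\Omega(n)$ unmatched stubs at informed vertices (the paper gets this from the back-match probability bound \eqref{eq:prob_not_informed_small} plus a Chernoff bound), grow from each uninformed $v$ a deferred-decision exploration tree restricted to degree-$\le\log\log n$ vertices with supercritical branching, and after $O(\log\log n)$ levels probe the informed stub reservoir; then Markov for the general case and a union bound for bounded degrees. Your closing worry about dependencies between explorations is a non-issue resolved exactly as you suggest: linearity of expectation and union bounds over graph-level events need no independence, and this is how the paper argues. (A minor inaccuracy: your invariant (ii), that a fraction $\beta\ge 1-\alpha-o(1)$ of unmatched stubs sits at low-degree uninformed vertices, does not follow as stated, since informed vertices can carry more than their per-capita share of stubs; the correct and sufficient substitute is \eqref{eq:prob_not_informed_small}, which uses $\alpha\le\gamma\sum_j\lambda_j j/(2M)$ from \eqref{eq:cdondition_alpha} to give per-match success probability at least $1-\gamma$.)

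The genuine gap is your asserted per-vertex failure probability $n^{-\omega(1)}$, which is false in both regimes and is load-bearing in your bounded-degree argument. In the merely sparse case, the probability that a uniformly chosen unmatched stub lands at a vertex of degree exceeding $L=\log\log n$ is $o(1)$ but can vanish arbitrarily slowly: for a power law with $\beta>3$ the fraction of such stubs is $\Theta\left((\log\log n)^{2-\beta}\right)$, so with probability at least $\Theta\left((\log\log n)^{(2-\beta)\delta}\right)$ \emph{all} $\delta$ neighbours of $v$ have degree above $L$ and no path satisfying property~3 exists at all — the paper's introduction makes the same point about low-degree vertices whose neighbours all have degree $n^{\Omega(1)}$. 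Hence your ``simple branching-process argument seeded at $v$'' succeeds only with probability $1-o(1)$, your option (a) union bound is unavailable, and one is forced into the first-moment route with a quantitative check that the total death probability over the first $O(\log\log n)$ levels is $o(1)$ — precisely the $\chi_i$ and $p_i$ computations in the paper that your sketch skips. The same overclaim undermines your bounded-degree case: even with constant $\Delta$, a fixed $v$ lies in a constant-size component avoiding $I_{t_2}$ with probability $n^{-\Theta(1)}$ (for $\delta=3$, a triple edge to a single partner vertex costs only $\Theta(n^{-2})$), so per-vertex failure is polynomially, never super-polynomially, small. Your union bound therefore only works if the polynomial exponent is tracked explicitly — the tree must be made tall enough that the final probing of the informed reservoir fails with probability $O(n^{-3})$, and the early-level death probabilities must be computed to the same precision, which is exactly where the paper invests the second half of its proof; asserting $n^{-\omega(1)}$ sidesteps rather than performs this accounting.
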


\begin{proof}
First, we claim that whenever $|I_i|=\alpha n$ there are at least $\alpha n$ unmatched stubs at informed vertices. We observe that there are in total at least $N:=\alpha n \delta $ stubs at informed vertices. If $F$ of those $N$ stubs are unmatched, then there are $(N-F)/2$ edges between vertices in $I_i$ and therefore $(N-F)/2-\alpha n+1$ of those edges where back matches. The probability of a back match is, by \eqref{eq:prob_not_informed_small}, at most $\gamma$ and it is a straightforward application of Theorem~\ref{thm:chernoff} to show that for $F<\alpha n$ the probability of this event is $e^{-\Omega(n)}$. We therefore assume that we have at least $\alpha n$ unmatched stubs at informed vertices. 
  
Let $v\in V\setminus I_{t_2}$ be an arbitrary vertex. We show that $v$ is w.h.p the starting vertex of a path that satisfies the properties of the lemma. Similarly as in the proof of Lemma~\ref{lem:phase_1} we describe a process that builds a 3-regular tree $T$ with root $v$, by matching first three stubs at $v$ and then recursively two new stubs at every neighbour. Note that we can always assume that all the vertices in the tree are uninformed, since otherwise the desired path is already established. Whenever we match to a vertex that is already in the tree, we let the corresponding branch of the tree \emph{die}. 

Let $|T_i|$ count the number of vertices on the $i$-th level of the tree. It follows from the sparseness of the degree sequence, in particular from
$$\sum_{j=0}^{\infty}\lambda_j j=O(1),$$
 that there exist a function $f(n)=o(1)$ such that 
$$\sum_{j=\log\log n}^n \lambda_j j=O(f(n)),$$
and therefore that the number of stubs at vertices of degree at least $\log\log n$ is 
$$\sum_{j=\log\log n}^nd_{j,n}j =O(n\cdot f(n)).$$
 As long as we do not hit a stub at an informed vertex, we have at least $|U|=\Omega(n)$ unmatched stubs and the probability that we match to a vertex of degree at least $\log \log n$ or a vertex that is already in the tree when we choose a stub u.a.r. from $U$ is therefore at most 
$$\chi_i= O\left( \frac{n\cdot f(n)+\sum_{j=1}^i|T_j|\log\log n}{n}\right),$$
which is $O(f(n)+ \log^2 n /n)=o(1)$ for $\sum_{j=1}^i|T_j|=O(\log n)$. 
Let
$$p_i:=\Pr\left[|T_i|< \left(\frac{6}{5}\right)^{i-1}\middle| |T_{i-1}|\geq \left(\frac{6}{5}\right)^{i-2}\right] \quad \text{ and let } \quad h:=\frac{\log\log n}{\log (6/5)}+1.$$
If we assume that $|T_{i-1}|\geq \left(\frac{6}{5}\right)^{i-2}$, then at least $4/5 (\frac{6}{5})^{i-2}$ of the $2(\frac{6}{5})^{i-2}$ stubs at level $i-1$ have to die in order to have $|T_i|<\left(\frac{6}{5}\right)^{i-2}$. This happens with probability at most
\begin{equation}\label{eq:pi}
p_i\le \binom{2\left(\frac{6}{5}\right)^{i-2}}{\frac{4}{5}\left(\frac{6}{5}\right)^{i-2}}\chi_i^{\frac{4}{5}\left(\frac{6}{5}\right)^{i-2}}\le 2^{2\left(\frac{6}{5}\right)^{i-2}}\chi_i^{\frac{4}{5}\left(\frac{6}{5}\right)^{i-2}} \le \left(4\chi_i^{4/5} \right)^{\left(\frac{6}{5}\right)^{i-2}}.
\end{equation}
Moreover, it is not hard to see that
$$\Pr\left[|T_2|<\left(\frac{6}{5}\right)\right] =\Pr[|T_2|<2]=O(\chi_2^2),$$
and the probability that $|T_{h}|< \log n$ is therefore at most
\begin{align*}
Pr\left[|T_2|<\frac{6}{5}\right]+\sum_{i=3}^{h} p_i &\le O(\chi_2^2) + \sum_{i=3}^{15} \left(4\chi_{15}^{4/5} \right)^{\left(\frac{6}{5}\right)^{i-2}} + \sum_{i=1}^{\infty} \left(4\chi_h^{4/5} \right)^i \\
&= O(\chi_{15}^{4/5})+ \frac{4\chi_h^{4/5}}{1-4\chi_h^{4/5}} =o(1).
\end{align*}
Thus, if we stop the tree process on level $h$ we have w.h.p at least $\log n$ free stubs. On the other hand, the vertices in $I_{t_2}$ have at least $\alpha n=\Omega(n)$ free stubs to which we can match. Hence, whenever we match one of the stubs at level $h$, we have a constant probability $\bar{p}$ to match to a stub of a vertex in $I_{t_2}$, and the probability that we do not match to at least one of those stubs is at most 
$$\bar{p}^{\Omega(\log n)}=n^{-\Omega(1)}.$$ 
We conclude that, as long as $|T|$ has enough vertices on level $h$ and we match at least one of them to $I_{t_1}$, vertex $v$ is adjacent to a path that satisfies the conditions of the lemma. Since this happens with probability at least $1-o(1)$, the expected number of vertices for which no such path exist is at most $o(n)$. It therefore follows from Markov's inequality that w.h.p for all but $o(n)$ vertices such a path exists. 

In order to prove the second statement of the lemma, we need to prove that the above calculations hold with probability $1-o(1/n^2)$ for bounded degree sequences. Let
$$h:=\frac{\log\log n^3 - \log\log (\frac{\alpha}{\Delta})}{\log(6/5)} +1$$
be the height of the tree that we build.  
For a constant maximum degree $\Delta$, the probability $\chi_i$ is at most
$$\chi_i= O\left( \frac{2^i \Delta}{n\delta}\right)=O\left(\frac{2^i}{n}\right)$$
for $i\le h$ and $|V\setminus I_{t_1}|=\Omega(n)$.
It is therefore not very hard to see that 
$$\Pr\left[|T_7|<\left(\frac{6}{5}\right)^6 \right]= \Pr[|T_7|<3] =O(1/n^3)$$
and it follows by \eqref{eq:pi} that for $8\le i \le h$
$$p_i = O\left(\frac{\log n}{n}\right)^{\frac{4}{5}(\frac{6}{5})^6}=o(n^{-2.3}).$$
Thus, the probability that we have at least $(\frac{6}{5})^{h-1}=3\log n / \log(\alpha/\Delta)$ free stubs on level $h$ is at least
$$1-\left(\Pr\left[|T_7|<\left(\frac{6}{5}\right)^6 \right]+\sum_{i=8}^h p_i \right)=1-o(1/n^2).$$
Finally, since the probability that we match to an informed vertex when we match the stubs at $T_h$ is at least $\bar{p}:=\frac{\alpha n}{\Delta n}=\frac{\alpha}{\Delta}$ for every stub, the probability that we match none of the stubs to an informed vertex is at most $\bar{p}^{3\log n / \log(\alpha/\Delta)}=O(1/n^3)$, and the second statement of the lemma therefore follows by a first moment argument. 
\end{proof}

\soda{In this extended abstract we omit the proof of Lemma~\ref{lem:lem_short_path}, but we show that such a good path transmits the rumor fast, which implies Lemma~\ref{lem:phase_3}.} 
\full{With Lemma~\ref{lem:lem_short_path} at hand, we can prove Lemma~\ref{lem:phase_3} by exploiting only properties of the random graph. }
\begin{proof}[Lemma~\ref{lem:phase_3}]
Note that Lemma~\ref{lem:lem_short_path} is stated for $|I_{t_2}|=\lceil \alpha n \rceil$. This is not a problem since we can switch from phase 2 to phase 3 as soon as $\lceil \alpha n \rceil$ vertices are informed. 

We first handle unbounded degree sequences. By Lemma~\ref{lem:lem_short_path}, we have w.h.p for all but $o(n)$ vertices a good path. We claim that the rumor spreads over such a path with probability at least $1-O(1/\log^2 n)$ in $o(\log n)$ rounds. Let $t:=3(\ln\log n)\cdot(\log\log n)$. Then for all $1\le j \le k$, the probability that it takes more than $t$ rounds until $v_j$ pushes the rumor to $v_{j-1}$ is at most
$$\left(1-\frac{1}{\log\log n} \right)^t\le e^{-\frac{t}{\log\log n}} =\frac{1}{\log^3n},$$
and the probability that there is at least one vertex on the path that needs more than $t$ rounds to push the rumor is at most $O(1/\log^2 n)$. Therefore, the expected number of vertices for which such a path exist but it does not transfer the rumor in time $k\cdot t=O((\log\log n)^3)$ is $o(n)$ and it follows by Markov's inequality that for every $\eps>0$ with high probability at least $(1-\eps)n$ vertices are informed at time $t_2+k\cdot t=t_2+O((\log\log n)^3)$.

For bounded degree sequences, we have that with probability $1-o(1/n)$ every vertex $v$ has path as described in Lemma~\ref{lem:lem_short_path}. We say that the rumor is delayed if it does not travel one step further on the path during a round of the protocol. Suppose that the rumor needs more than $t'$ steps to travel from $v_k$ to $v$. This means that the rumor was delayed at least $t'-k$ times. Let $t':=6\Delta^2\ln n$. Since the expected number of delays in $t'$ rounds is at most $t'(\Delta-1)/\Delta$, it follows by the Chernoff bound that the rumor reaches $v_k$ with probability $1-o(1/n^2)$ after $t'$ rounds and we can finish the proof with a union bound. 
\end{proof}
}

\vspace{-3mm}
\bibliographystyle{amsplain}
\bibliography{rumor}
\end{document}